\newcommand{\N}{\mathbb{N}}
\newcommand{\Nz}{\N_0}
\newcommand{\C}{\mathbb{C}}
\newcommand{\Z}{\mathbb{Z}}
\newcommand{\lp}{\left(}
\newcommand{\rp}{\right)}
\newcommand{\hh}{\hat{h}}
\newcommand{\hu}{\hat{u}}
\newcommand{\hv}{\hat{v}}
\newcommand{\hT}{\hat{T}}
\newcommand{\hU}{\hat{U}}
\newcommand{\hV}{\hat{V}}
\newcommand{\hPsi}{\hat{\Psi}}
\newcommand{\tT}{\tilde{T}}
\newcommand{\teta}{\tilde{\eta}}
\newcommand{\tp}{{\tilde{p}}}
\newcommand{\bz}{\bar{z}}
\newcommand{\tA}{\tilde{A}}
\renewcommand{\th}{\tilde{h}}
\newcommand{\hnu}{\hat{\nu}}
\newcommand{\hA}{\hat{A}}
\newcommand{\oJ}{\mathsf{J}}
\newcommand{\oU}{\mathsf{U}}
\newcommand{\oV}{\mathsf{V}}
\newcommand{\oX}{\mathsf{X}}
\newcommand{\hoU}{\hat{\oU}}
\newcommand{\hoV}{\hat{\oV}}
\newcommand{\hoX}{\hat{\oX}}
\newcommand{\cP}{\mathcal{P}}
\newcommand{\cQ}{\mathcal{Q}}
\newcommand{\hcP}{\hat{\cP}}
\newcommand{\cR}{\mathcal{R}}
\newcommand{\cU}{\mathcal{U}}
\newcommand{\tre}[1]{}
\newcommand{\fS}{\mathsf{S}}
\DeclareMathOperator{\ev}{\operatorname{ev}}
\DeclareMathOperator{\Wr}{\operatorname{Wr}}
\DeclareMathOperator{\Erf}{\operatorname{Erf}}
\newcommand{\Res}[1]{\operatorname*{Res}\limits_{#1}}
\newtheorem{thm}{Theorem}[section]
\newtheorem{prop}[thm]{Proposition}
\newtheorem{cor}[thm]{Corollary}
\newtheorem{lem}[thm]{Lemma}
\newtheorem{definition}[thm]{Definition}
\title{Exceptional Krall polynomials}
\author{A. Kasman}
\address{Department of
  Mathematics, College of Charleston}
\email{kasmana@cofc.edu}
\author{ R. Milson}
\address{Department of Mathematics and Statistics, Dalhousie
  University}
\email{rmilson@dal.ca}
\begin{document}
\begin{abstract}
  The purpose of this paper is to describe a novel class of
  exceptional Krall orthogonal polynomials of Hermite type.  This
  means that the polynomials in question are (i) orthogonal with
  respect to a Hermite-type weight; (ii) are the eigenfunctions of a
  higher-order differential operator; (iii) the degree sequence of the
  polynomial family in question is missing a finite number of degrees.
  Regarding the second point, unlike the known class of exceptional
  Hermite polynomials that satisfy a second-order eigenvalue equation,
  the polynomials we introduce here are not eigenfunctions of any 2nd
  order differential operator, but are for one of 4th order. Regarding
  the third point, our family does not include a polynomial of degree
  zero and consequently satisfies a 5th order recurrence relation
  instead of the classical 3-term relation.  For these reasons, we
  refer to the polynomials under discussion as exceptional Krall
  polynomials.
\end{abstract}
\maketitle
\section{Introduction}
Classical orthogonal polynomials of the Hermite, Laguerre, Jacobi and
Bessel type are precisely those families of orthogonal polynomials
that arise as eigenfunctions of a second-order differential
operator\footnote{The first 3 families arise as solutions of a
  classical Sturm-Liouville problem where as the Bessel family is
  based on a more general notion of orthogonality.}.  The problem
formulated by Krall \cite{K38}, is to find all families of orthogonal
polynomials that are eigenfunctions of a differential operator. Krall
also showed that the order of the eigenoperator is necessarily even
and went on to do the classification for 4th order eigenoperators.  In
this way of thinking, the classical families listed above correspond
to the subcase where the operator has order 2; this result is often
cited as the Bochner Theorem \cite{Bo29}.  Littlejohn \cite{L82} found
examples of orthogonal polynomials that satisfy a 6th order eigenvalue
relation and coined the term \textit{Krall orthogonal polynomials}.
By contrast to \textit{classical orthogonal polynomials}, such
families are defined as orthogonal eigenpolynomials of differential
operators of arbitrary order.

The question of classification followed quickly and was often cast
using the terminology of the \textit{Bochner-Krall class} or as the
\textit{Bochner-Krall problem} \cite{EKLW01,H00}.  Such questions
involve various notions of orthogonality, but the latest trends
seem to be in the direction of replacing orthogonality with respect to
a weight with the requirement that the eigenpolynomials
satisfy a recurrence relation.  In \cite{H00} one finds the notion of
an \textit{extended Bochner-Krall problem} which imposes the existence
of a 3-term recurrence relation.  A more general formulation is found
in \cite{HST25} where they speak of a \textit{generalized algebraic
  Bochner-Krall problem} and replace orthogonality with respect to a
weight with the existence of a recurrence relation of an arbitrary
order.

This way of framing the classification question has a natural
interpretation in terms of bispectrality where the eigenpolynomials
are at once the eigenfunctions of a differential operator in a
continuous variable and of a shift operator in a discrete variable.
The notion of bispectrality was introduced by Duistermaat and Gr\"unbaum
in \cite{DG87} who answered the following question:
\begin{quote}
  \textit{Duistermaat-Gr\"unbaum Problem: to determine all
    Schr\"odinger operators whose eigenfunctions are also
    eigenfunctions of a dual differential operator in the spectral
    variable.}
\end{quote}
To quote \cite{H00},
\begin{quote}
  \textit{The extended Bochner-Krall problem can be thought of as a
  partially discretized version of the Duistermaat-Gr\"unbaum problem,
  with the Schr\"odinger operator replaced by a doubly infinite
  tridiagonal matrix.}
\end{quote}
At present, the latter problem remains unsolved, but in a series of
papers in the late 90s, Gr\"unbaum, Haine, and Horozov showed that
known examples, including the examples found by Krall and Littlejohn,
may be constructed using the method of bispectral Darboux
transformations \cite{GH97,GHH97}.  The latter notion was introduced
simultaneously in \cite{BHY96} and \cite{KR97}; both papers exhibited
novel examples of rank $r$ bispectral problems, that is dual operators
with a common $r$-dimensional space of eigenfunctions.  Alternatively,
a general construction of bispectral Krall-Laguerre and Krall-Jacobi
polynomials based on the method of $\mathcal{D}$-operators is given in
\cite{DI22}.

In a parallel development, the Bochner Theorem was generalized by
considering orthogonal polynomials as solutions of a general
\textit{polynomial Sturm-Liouville problem} \cite{GKM09}. This
terminology refers to a Sturm-Liouville problem with polynomial
eigenfunctions but without the assumption that there be an
eigenpolynomial of every degree.  The resulting class of orthogonal
polynomials, now known as \textit{exceptional orthogonal polynomials},
has been intensively studied over the past 15 years with a complete
classification now within reach \cite{GGM13,GGM19,GGM24}.

A connection between exceptional polynomials and bispectral Darboux
transformations was discovered by studying the recurrence relations
enjoyed by exceptional Hermite polynomials \cite{GKKM16,KM20}. The
construction works by factorizing a certain $\ell$th degree polynomial
of $T_2$, the classical Hermite operator, as $A^\dag A=p(T_2)$ where
$A$ is a $\ell$th order operator and $A^\dag$ is its formal adjoint
relative to the Hermite weight $\exp(-x^2)$.  The exceptional operator
$\hT$ is then recovered from the factorization $p(\hT) = AA^\dag$.
This procedure works as long as we take
$p(T) = (T+2k_1)\cdots (T+2k_\ell)$ where $k_1,\cdots, k_\ell$ is a
choice of distinct natural numbers.  The corresponding recurrence
relations can then be recovered by applying the bispectral involution
to the corresponding point in the adelic Grassmannian and translating
the differential operators in the spectral variable into difference
operators.

The construction that we exhibit here is structurally similar in that
we exhibit the factorization $A^\dag A = p_2(T_2;b)$, where
\begin{equation}
  \label{eq:p2def}
p_2(T;b):=(T+b^2)(T+b^2-2)
\end{equation}
is the indicated 2nd degree polynomial that depends on a continuous
parameter $b$.  The Krall eigenoperator is then obtained by a Darboux
transformation as $\hT_4= A A^\dag$.  In this construction, it is not
possible to express the 4th-order $\hT_4$ as a polynomial of a second
order operator, and so the resulting eigenpolynomials may be
reasonably construed as exceptional analogues of the usual Krall
polynomials.  By contrast with orthogonal polynomials where every
degree is present, the exceptional Krall polynomials do not satisfy a
3-term recurrence relation, but rather a fifth order recurrence
relation.

Since the above construction belongs to the class of bispectral
Darboux transformations, there is a dual construction of the
recurrence relations based on a factorization of the $(\oJ+b)^2$
difference operator, where $\oJ$ is the Jacobi operator for the
classical Hermite polynomials; see \eqref{eq:Jdef} for the explicit
definition.  Factorizations of $(\oJ+b)^2$ were fully classified by
Gr\"unbaum, Haine, and Horozov \cite{GHH97}. They labelled the
resulting eigenfunctions \emph{Krall-Hermite polynomials} because
these polynomials satisfy both a 5-term recurrence relation and are
eigenfunctions of a 4th order operator.  Their construction produces a
2-parameter family of polynomials sequences.  These parameters are
called $\alpha$ and $k$ in Section 3 of \cite{GHH97}.  The resulting
polynomial sequences can be reasonably called exceptional because
there is no polynomial of degree 0, but cannot be called orthogonal
because Darboux transformations act on operators and do not have a
meaningful action on the inner product.  The authors of \cite{GHH97}
do not remark on the missing degree, but do say:
\begin{quote}
  \textit{These polynomials cannot be orthogonal, but as we remarked
    earlier, in some electrostatic problems it is only the
    differential equation that plays an important role.}
\end{quote}
Remarkably, orthogonality can be recovered by considering the special
case of $\alpha=1/k$.  With this specialization, the Krall-Hermite
polynomials of \cite{GHH97} coincide with the exceptional Krall
polynomials that we construct here. The relation between our parameter
and the parameters of \cite{GHH97} is $b=-1/\alpha=-k$.

The paper is organized as follows.  Section \ref{sect:mr} begins with
some necessary background on classical Hermite polynomials, and then
passes to the statement of the main definitions and results: the
exceptional Krall polynomials, their exponential generating function,
an eigenvalue equation, orthogonality, and finally, a recurrence
relation. Section \ref{sect:proofs} is devoted to the proofs of these
results.  Unlike the classical Hermite polynomials, the exceptional
Krall polynomials introduced here depend on a continuous parameter,
which we take to be $b$.  Certain of the properties described above
undergo various degenerations for certain characteristic values of
this parameter; this is described in Section \ref{sect:charvals}.
Finally, Section \ref{sect:bispectrality} is devoted to showing how
the exceptional Krall polynomials may be obtained from their classical
counter parts by means of a bispectral Darboux transformation.  As
such, the eigenoperator and the recurrence relation given in Section
\ref{sect:mr} can be extended to certain commutative algebras of
eigenoperators and recurrence relations.  The former is generated by a
4th order and a 6th order differential operator, while the latter is
generated by a 5th order and a 7th order recurrence relations.
Section \ref{sect:bispectrality} exhibits these higher order
generators and shows how their construction is related to the
methodology of bispectral Darboux transformations.
\section{Key definitions and main results.}
\label{sect:mr}
\subsection{Classical Hermite polynomials}
We begin by recalling the definition and essential properties of the
classical Hermite polynomials.  We will utilize the monic form of these
polynomials:
\begin{align}
  \label{eq:Hndef} 
  h_n(x) &=    \sum_{j=0}^{\lfloor n/2\rfloor} \frac{n!}{(n-2j)!
    j!} x^{n-2j} (-4)^{-j}
\end{align}
These can also be defined by means of the 3-term recurrence relation
\begin{equation}
  \label{eq:hermrel3}
  x h_n = h_{n+1} + \frac{n}{2} h_{n-1},\quad n=0,1,2,\ldots
\end{equation}
subject to initial condition $h_0=1$.  The $h_n(x),\; n\in \Nz$
constitute a family of orthogonal polynomials because they obey
orthogonality relations
\begin{equation}
  \label{eq:hermorthog}
  \frac{1}{\sqrt{\pi}}   \int_{-\infty}^\infty  h_m(x) h_n(x) e^{-x^2}
  dx = \delta_{mn}  \nu_n,\quad m,n\in \Nz, 
\end{equation}
where
\begin{equation}
  \label{eq:nuidef}
 \nu_n = 2^{-n}n!,\quad n\in \Nz.
\end{equation}
In addition to being orthogonal, the $h_n(x),\; n\in \Nz$ are
considered to be classical polynomials because they are polynomial
solutions of a second-order eigenvalue equation:
\begin{equation}
  \label{eq:hermde}
  T_2 h_n =- 2n h_n ,\quad n\in \Nz,
\end{equation}
where
\begin{equation}
  \label{eq:T2def}
  T_2(x,\partial_x) = \partial_x^2 -2x \partial _x.
\end{equation}
is the classical Hermite differential operator.

Classical Hermite polynomials obey a lowering and raising relations
\begin{align}
  \label{eq:hermlrel}
  &\partial_x h_n =  n h_{n-1},\quad n=1,2,\ldots.\\
  \label{eq:hermrrel}
  &\lp-\frac12  \partial_x+x\rp h_n = h_{n+1},\quad n\in \Nz.
\end{align}
These relations are, essentially, a restatement of the 3-term
recurrence relation \eqref{eq:hermrel3}.  The raising relation, in
particular, directly implies the Rodriguez formula
\begin{equation}
  \label{eq:hermrod}
  h_{n}(x) = (-2)^{-n} e^{x^2}\partial_x^n e^{-x^2},\quad n\in \Nz.
\end{equation}
\noindent
Hermite polynomials may also be recovered as the coefficients of the
generating function
\begin{equation}
  \label{eq:Psidef}
  \Psi(x,z) := \exp(x z - z^2/4) = \sum_{n=0}^\infty h_n(x) \frac{z^n}{n!}
\end{equation}
This can be demonstrated by observing that
\begin{equation}
  \label{eq:hermrrdiffrel}
  (z -2x +2\partial_z) \Psi_0 = 0.
\end{equation}
Expanding in powers of
$z$, we see that the latter is equivalent to the 3-term relation
\eqref{eq:hermrel3}.

Properties \eqref{eq:hermorthog}, \eqref{eq:hermde},
\eqref{eq:hermlrel}, and \eqref{eq:hermrrel} can all be established
directly by straightforward manipulations of the above $\Psi_0$.  For
example, the evident relations
\begin{align}
  \label{eq:T2Psiz}
  T_2(x,\partial_x) \Psi(x,z) &= -2z \partial_z
                                  \Psi(x,z)=(z^2-2xz)\Psi(x,z)\\ 
  x \Psi(x,z) &= \lp \partial_z + \frac{z}{2} \rp \Psi(x,z)
\end{align}
directly imply \eqref{eq:hermrel3} and \eqref{eq:hermde}.
Orthogonality \eqref{eq:hermorthog} can be recovered by observing that
\[ \frac{1}{\sqrt{\pi}}\int \Psi(x,z) \Psi(x,\bz) e^{-x^2}dx =
  \frac{1}{2} \exp(z\bz/2) \Erf(x-\Re z)+C(z), \] where
\[ \Erf(x) = \frac{2}{\sqrt{\pi}} \int_0^x e^{-u^2} du \] is the usual
error function. It follows that
\[ \frac{1}{\sqrt{\pi}}\int_{-\infty}^\infty \Psi(x,z) \Psi(x,\bz)
  e^{-x^2} dx = \exp(z\bz/2). \] The orthogonality relation for the
Hermite polynomials \eqref{eq:hermorthog} is then obtained by
expanding both sides of the above equation as power series in $z$ and
$\bar{z}$.

\subsection{The exceptional Krall polynomials}
We now define some novel analogues of the classical Hermite
polynomials.  An important note is that, unlike classical and
exceptional Hermite polynomials, the polynomials we consider here
depend on a continuous parameter, which we take to be $b$.  Our
primary definition is the following.
\begin{equation}
  \label{eq:H1def}
  \begin{aligned}
    \hh_n(x;b) &:= (b^2-2n) h_{n}(x)-
 \frac{b h_n(x)  + n    h_{n-1}(x)}{x+b},\quad n=0,1,\ldots
  \end{aligned}
\end{equation}
Setting
\begin{align}
  \label{eq:Uxdef}
  U(x,\partial_x;b)
  &=  \partial_x + b ,\\
  \label{eq:hAdef}
  \hA(x,\partial_x;b)
  &=  T_2+b^2 - (x+b)^{-1}U
\end{align}
we can  restate \eqref{eq:H1def} simply as
\begin{equation}
    \label{eq:hhA2H}
    \hh_n =  \hA h_n,\quad n=0,1,2,\ldots.
\end{equation}
We will refer to the resulting family of rational functions as being
\emph{quasi-polynomial.}  This means that this family consists of
polynomials divided by a common denominator; in this case the
denominator is $x+b$.  The exceptional Krall polynomials
$\th_{n+1}(x),\; n=0,1,\ldots, $ can then be given as the numerators of
this family, namely as
\begin{equation}
  \label{eq:tHdef}
  \begin{aligned}
    \th_{n+1}(x;b) &=  (x+b) \hh_{n}(x;b),\quad
    n=0,1,2,\ldots\\
    &= (b^2-2n)(x+b)h_n(x)-b h_n(x) - n h_{n-1}(x)
  \end{aligned}
\end{equation}
The resulting expressions are polynomials in $x$ having the indicated
degree with a quadratic dependence on $b$.

By inspection, if $b^2\ne 2n$, then $\deg \th_{n+1}(x) = n+1$. Thus,
generically, $\th_1,\th_2,\th_3,\ldots$ span a codimension-1
polynomial subspace, and as we demonstrate below, may be regarded as a
new class of orthogonal polynomial that are somehow related to the
classical Hermite polynomials.

\subsection{Key properties}
Next, we exhibit a suite of properties of the above defined polynomial
sequence.  These properties are all analogues of the properties
enjoyed by the classical Hermite polynomials: an exponential
generating function, an eigenvalue equation, orthogonality, and a
recurrence relation.  The proofs of these assertions are given in the
sequel.  The eigenvalue equation involves a fourth order operator,
justifying the ``Krall'' designation.  Since there are missing
degrees, the recurrence relation is fifth order.  This is another
consequence of the ``exceptionality'' of this family.
\subsubsection{The generating function}
Applying the intertwining operator to the classical generating
function \eqref{eq:Psidef} gives a generating function
for the  quasi-polynomials \eqref{eq:H1def}. Set
\begin{equation}
  \label{eq:hPsidef}
  \hPsi(x,z;b) = 
  \lp z^2-2zx + b^2 - \frac{z+b}{x+b} \rp \Psi(x,z)
\end{equation}
\begin{prop}
  \label{prop:genfunc}
  We have
  \begin{equation}
  \hPsi(x,z;b) = \sum_{n=0}^\infty \hh_{n}(x)
  \frac{z^n}{n!}  
\end{equation}
\end{prop}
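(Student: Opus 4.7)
The plan is to apply the intertwining operator $\hA$ directly to the classical generating function $\Psi(x,z)$, and verify two things: first, that the result coincides with $\hPsi(x,z;b)$ as defined in \eqref{eq:hPsidef}; second, that interchanging $\hA$ with the series $\sum h_n(x)z^n/n!$ produces precisely $\sum \hh_n(x) z^n/n!$. The second point is immediate from \eqref{eq:hhA2H} together with the fact that $\hA$ is a differential operator in $x$ alone (with coefficients not depending on $z$), and $\Psi$ is jointly entire, so term-by-term application is legitimate.

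The substantive content is therefore the computation of $\hA \Psi(x,z)$. Recalling that $\hA = T_2 + b^2 - (x+b)^{-1}U$ with $U = \partial_x + b$, I would first use \eqref{eq:T2Psiz} to write $T_2 \Psi = (z^2 - 2xz)\Psi$. Next, since $\partial_x \Psi = z\Psi$ by direct differentiation of $\exp(xz - z^2/4)$, I get $U\Psi = (z+b)\Psi$. Assembling these yields
\begin{equation*}
\hA \Psi = \left( z^2 - 2xz + b^2 - \frac{z+b}{x+b}\right) \Psi,
\end{equation*}
which matches \eqref{eq:hPsidef} verbatim.

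Combining the two observations gives
\begin{equation*}
\hPsi(x,z;b) = \hA \Psi(x,z) = \sum_{n=0}^\infty (\hA h_n)(x)\frac{z^n}{n!} = \sum_{n=0}^\infty \hh_n(x) \frac{z^n}{n!},
\end{equation*}
which is the claimed identity. There is no real obstacle here; the only thing to be careful about is the slightly singular factor $(x+b)^{-1}$, but since it commutes with operations in $z$, it passes through the sum without issue, and the resulting quasi-polynomial expansion is exactly what \eqref{eq:H1def} prescribes.
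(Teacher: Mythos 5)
Your argument is correct and is essentially the same as the paper's: both compute $\hA\Psi$ using $\partial_x\Psi = z\Psi$ (hence $U\Psi=(z+b)\Psi$) together with \eqref{eq:T2Psiz} to match \eqref{eq:hPsidef}, and then invoke \eqref{eq:Psidef} and \eqref{eq:hhA2H} to identify the coefficients. The only difference is that you explicitly justify the term-by-term interchange, which the paper leaves implicit.
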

\subsubsection{The Krall property}
Next, we exhibit the fourth-order exceptional analogue of the Hermite
differential equation \eqref{eq:hermde}.  
\begin{prop}
  \label{prop:tTeigen}
  The exceptional polynomials $\th_{n+1}(x),\; n=0,1,2,\ldots $ enjoy
  the following eigenvalue relation:
  \begin{equation}
  \label{eq:tTeigen}
      \tT_4 \th_{n+1} = p_2(-2n)\, \th_{n+1}= (b^2-2n)(b^2+2-2n)
      \th_{n+1},\quad n=0,1,2,\ldots 
    \end{equation}
    where
\begin{align}
    \label{eq:tT4}
  \tT_4
  &= 4(T_2+b^2-1) \hA-3 (T_2+b^2) \lp  T_2 +b^2-2\rp\\\nonumber 
  &=  \partial_x^4-4\lp x+ \frac1{x+b}\rp \partial_x^3 + \ldots
\end{align}
and where $p_2$ is the polynomial defined in \eqref{eq:p2def}.
\end{prop}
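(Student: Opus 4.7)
The plan is to derive the eigenvalue equation from the operator-level Darboux intertwining
\[\tT_4\cdot B = B\cdot p_2(T_2;b),\qquad B := (x+b)\hA,\]
viewed as acting on the space of polynomials. Since \eqref{eq:tHdef} gives $\th_{n+1} = B h_n$ and $p_2(T_2;b)\,h_n = p_2(-2n;b)\,h_n$ by \eqref{eq:hermde}, the intertwining applied to $h_n$ immediately yields
\[\tT_4\,\th_{n+1} = \tT_4 B h_n = B\,p_2(T_2;b) h_n = p_2(-2n;b)\,\th_{n+1},\]
which is the asserted eigenvalue relation.

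The intertwining itself arises from the Darboux factorisation philosophy outlined in Section~1. Let $\hA^{*}$ denote the formal adjoint of $\hA$ relative to the Hermite weight $e^{-x^2}$. Since $T_2$ is self-adjoint (owing to its Sturm--Liouville form $T_2 = e^{x^2}\partial_x(e^{-x^2}\partial_x)$) and $U^{*} = 2x-\partial_x+b$, a direct computation gives
\[\hA^{*} = T_2 + b^2 - 2 + (x+b)^{-1}U - (x+b)^{-2}.\]
The two algebraic identities to verify are: (i) the conjugation $\hA\cdot(x+b) = (x+b)\cdot\hA^{*}$, which follows from the Leibniz-type identity $U(x+b) = 1 + (x+b)U$ together with $[T_2,x+b] = 2\partial_x - 2x$; and (ii) the Darboux factorisation $\hA^{*}\hA = p_2(T_2;b)$, which requires checking that the rational $(x+b)^{-k}$ pieces in the product cancel and the polynomial-coefficient remainder collapses to $(T_2+b^2)(T_2+b^2-2)$. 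Granted (i) and (ii), one has $\tT_4 = (x+b)\hA(x+b)^{-1}\hA$, whence by telescoping
\[\tT_4 B = (x+b)\hA(x+b)^{-1}\hA(x+b)\hA = (x+b)\hA\hA^{*}\hA = (x+b)\hA\,p_2(T_2;b) = B\,p_2(T_2;b).\]

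The main obstacle is reconciling the Darboux form $\tT_4 = (x+b)\hA(x+b)^{-1}\hA$ with the explicit polynomial-coefficient expression $4(T_2+b^2-1)\hA - 3\,p_2(T_2;b)$ in \eqref{eq:tT4}: the rational $(x+b)^{-k}$ pieces produced by expanding the Darboux product must undergo a non-obvious cancellation to reproduce this combination, with the coefficients $4$ and $-3$ precisely those required to kill the spurious rational terms. This equivalence can be confirmed either by expanding both sides in the Weyl algebra extended by $(x+b)^{-1}$ and matching coefficients term by term, or, more efficiently, by comparing the action of both fourth-order candidate operators on the basis $\{1,x,x^2,x^3\}$, since agreement on these four monomials determines any fourth-order differential operator uniquely.
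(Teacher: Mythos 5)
Your overall strategy---establish the intertwining $\tT_4\circ(x+b)\hA=(x+b)\hA\circ p_2(T_2)$ and apply it to $h_n$---is exactly the paper's, and your identifications are sound: your $\hA^{*}$ agrees with the paper's $\hA^\dag$ from \eqref{eq:hAadjdef}, the conjugation $\hA\circ(x+b)=(x+b)\hA^{*}$ is \eqref{eq:A2conj}, and the factorization $\hA^{*}\hA=p_2(T_2)$ is \eqref{eq:pT02}. The gap is the step you yourself flag as the ``main obstacle'': you never establish that the explicitly given operator $\tT_4=4(T_2+b^2-1)\hA-3(T_2+b^2)(T_2+b^2-2)$ coincides with the Darboux form $(x+b)\hA(x+b)^{-1}\hA$, and without that identity your telescoping argument proves the eigenvalue relation for a different operator than the one in the statement. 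Moreover, one of the two verification schemes you offer is invalid: agreement of two fourth-order operators on $\{1,x,x^2,x^3\}$ does not force equality, since their difference could be $c(x)\partial_x^4$ for an arbitrary coefficient $c(x)$ and still annihilate all four monomials; a nonzero operator of order at most $4$ can have a kernel of dimension up to $4$, so you would need agreement on at least the five monomials $1,x,\dots,x^4$ to conclude that the difference vanishes.

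The paper closes this gap without any brute-force expansion. Using \eqref{eq:pT02}, rewrite $\tT_4=-3\hA^\dag\hA+4(T_2+b^2-1)\hA$ and compute $\tT_4\circ(x+b)$ directly: by \eqref{eq:A2conj} both summands acquire a right factor of $\hA^\dag$, and since $\hA^\dag\circ(x+b)=(T_2+b^2)\circ(x+b)-V$, the remaining left factor is
\[ -3\,\hA^\dag\circ(x+b)+4(T_2+b^2-1)\circ(x+b)=(T_2+b^2)\circ(x+b)+3V-4(x+b), \]
which collapses to $(x+b)\hA$ upon using $T_2\circ(x+b)=(x+b)T_2+U-V$ and $U+V=2(x+b)$. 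Hence $\tT_4\circ(x+b)=(x+b)\hA\hA^\dag=(x+b)\hT_4$, and the eigenvalue relation follows by applying this to $\hh_n$ and invoking $\hT_4\hh_n=p_2(-2n)\hh_n$. To repair your write-up, either carry out the Weyl-algebra expansion you allude to (your first, valid, option), or substitute this conjugation computation, which is shorter and avoids expanding the rational terms altogether.
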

\noindent A similar eigenvalue relation holds for the
quasi-polynomials $\hh_n(x),\; n\in \Nz$ as defined in
\eqref{eq:H1def}.  Let $T_2,U,\hA$ be as in \eqref{eq:T2def}
\eqref{eq:Uxdef} \eqref{eq:hAdef}, and set\footnote{ The ``adjoint''
  nomenclature will be justified in the next section.}
\begin{align}
  \label{eq:Vxdef}
  V(x,\partial_x;b)&=U^\dag= -\partial_x + 2x+b\\
  \label{eq:hAadjdef}
  \hA^\dag(x,\partial_x)
  &= T_2 + b^2-V\circ (x+b)^{-1},\\
%   &=T_2 + b^2-2+(x+b)^{-1}U-(x+b)^{-2},\\
  \label{eq:hT4}
  \hT_4 &=\hA \hA^\dag\\ \nonumber
         &= (T_2+b^2)(T_2+b^2-2)\\ \nonumber
         &\qquad + 4(x+b)^{-1}\lp2\partial_x+b\rp
           - 4(x+b)^{-2}\lp \partial_x^2+2b \partial_x+b^2+1\rp
  \\\nonumber
         &\qquad + 8(x+b)^{-3}\lp \partial_x + b\rp -8 (x+b)^{-4}
\end{align}
\begin{prop}
  We have
  \label{prop:hTeigen}
  \begin{equation}
      \label{eq:hTeigen}
    \hT_4 \hh_n     = (b^2-2n)(b^2+2-2n)\, \hh_n, \quad  n=0,1,2,\ldots
  \end{equation}
\end{prop}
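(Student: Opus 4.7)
The plan is to exploit the Darboux structure built into the definition $\hT_4 = \hA \hA^\dag$, reducing the proposition to the single operator identity
\[\hA^\dag \hA = p_2(T_2;b).\]
Granted this factorization, the claim follows in one line. Combining $\hh_n = \hA h_n$ from \eqref{eq:hhA2H} with the classical eigenvalue equation \eqref{eq:hermde} yields
\[\hT_4 \hh_n = \hA \hA^\dag (\hA h_n) = \hA\, p_2(T_2;b)\, h_n = p_2(-2n;b)\, \hA h_n = p_2(-2n;b)\, \hh_n,\]
which is the content of \eqref{eq:hTeigen} after evaluating $p_2$ at $-2n$.

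The work therefore concentrates on verifying $\hA^\dag \hA = p_2(T_2;b)$. Expanding from \eqref{eq:hAdef} and \eqref{eq:hAadjdef} gives
\[\hA^\dag \hA = (T_2+b^2)^2 - (T_2+b^2)(x+b)^{-1} U - V(x+b)^{-1}(T_2+b^2) + V(x+b)^{-2} U,\]
and the task is to show that the three rational terms on the right combine to the polynomial $-2(T_2+b^2)$, leaving $(T_2+b^2)(T_2+b^2-2)$. The relevant tools are the commutators $[U,(x+b)^{-1}] = -(x+b)^{-2}$ and $[V,(x+b)^{-1}] = (x+b)^{-2}$, together with the product identities $UV = -T_2+2bx+b^2+2$ and $VU = -T_2+2bx+b^2$, all immediate from the definitions of $U$, $V$, $T_2$. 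Using these relations one pushes every $(x+b)^{-k}$ factor to the right and checks that its coefficient vanishes.

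The main obstacle is bookkeeping: cancellations must occur at each of the orders $(x+b)^{-1}$, $(x+b)^{-2}$, $(x+b)^{-3}$, $(x+b)^{-4}$. A cleaner route, which I would actually take, is to test the putative identity against the classical generating function $\Psi(x,z) = \exp(xz - z^2/4)$ from \eqref{eq:Psidef}. Because $\Psi$ is a cyclic vector for the algebra of differential operators with meromorphic coefficients in $x$, the operator identity is equivalent to the functional identity $\hA^\dag \hA\, \Psi = p_2(T_2;b)\,\Psi$. Using $U\Psi = (z+b)\Psi$, $V\Psi = (-z+2x+b)\Psi$, and $T_2\Psi = (z^2-2xz)\Psi$, both sides reduce to scalar multiplication by explicit rational functions of $(x,z,b)$, and the identity collapses to a short algebraic check that also transparently explains how the singularities at $x=-b$ on each side of the identity match.
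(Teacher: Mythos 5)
Your argument has the same skeleton as the paper's: everything rests on the factorization $\hA^\dag\hA = p_2(T_2;b)$, from which the intertwining relation $\hT_4\hA = \hA\, p_2(T_2;b)$ and then the eigenvalue relation follow exactly as you write, so the reduction is correct and identical to the paper's. The difference lies in how the factorization itself is verified. The paper routes the computation through Lemma \ref{lem:A2conj}, in particular the identities $T_2+b^2=-VU+2b(x+b)$, $U+V=2(x+b)$ and $[U,V]=2$, which absorb almost all of the $(x+b)^{-k}$ bookkeeping that your first (raw commutator) route would require; your listed ingredients $[U,(x+b)^{-1}]=-(x+b)^{-2}$, $[V,(x+b)^{-1}]=(x+b)^{-2}$, $VU=-T_2+2bx+b^2$, $UV=VU+2$ are correct and sufficient, just less economically organized. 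Your preferred generating-function route is also legitimate in principle: $\Psi(x,z)$ does separate differential operators with rational coefficients, since $D\Psi/\Psi=\sum_k a_k(x)z^k$ recovers the coefficients of $D$. But one step as described is a trap. You cannot evaluate $p_2(T_2)\Psi$ by substituting the symbol $z^2-2xz$ for $T_2$, because after the first application the operator acts on a product $w(x,z)\Psi$ through the Leibniz rule, not by multiplication; concretely
\begin{equation*}
(T_2+b^2)^2\Psi=\lp (z^2-2xz+b^2)^2-4z^2+4xz\rp \Psi,
\end{equation*}
not $(z^2-2xz+b^2)^2\Psi$. If the derivatives of the $x$-dependent prefactors are carried through correctly on both sides, the check does succeed (the $(x+b)^{-3}$ and $(x+b)^{-2}$ terms cancel, the $(x+b)^{-1}$ terms collapse to the polynomial $-2(z^2-2xz+b^2)$, and the polynomial parts match), but the phrase ``both sides reduce to scalar multiplication'' invites exactly the naive symbol substitution that would derail the verification, so make the Leibniz-rule step explicit if you take this route.
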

\noindent
% This relation is demystified once we assert that $\hT_4$ arises from
% the following intertwining relation.
% \begin{equation}
%   \label{eq:T4intertwine}
%   \hT_4 \circ \hA = \hA \circ \pi_2(T_2)
% \end{equation}
% where
% \begin{align*}
%   \pi_2(T)& = \frac14 (T-2)(T+2b^2).
% \end{align*}
% \tre{
%   Indeed, observe that
% \[
%   \begin{aligned}
%   \pi_2(T_2) h_{i} &= \pi_2(-2n)h_{i}\\
%   &= \frac14(-2n-2)(-2n+2b^2)h_{i}\\
%   &=  (n+1)(i-b^2)h_{i},\quad n=0,1,2,\ldots.
% \end{aligned}
% \]}
% The eigenvalue relation \eqref{eq:hTeigen} now follows directly from
% \eqref{eq:hhA2H}.
\subsubsection{Orthogonality}
\begin{prop}
  \label{prop:hHorthog}
  The exceptional Krall polynomials satisfy the orthogonality relations
  \begin{equation}
    \label{eq:thorthog}
    \frac{1}{\sqrt{\pi}} \int_{-\infty}^\infty \th_{n+1}(x;b)
    \th_{m+1}(x;b)\frac{e^{-x^2}}{(x+b)^2}  
    dx = \delta_{mn} p_2(-2n)\nu_{n},\quad m,n=0,1,2,\ldots,
  \end{equation}
  where the integral is taken along \textbf{any} contour from $-\infty$ to
  $+\infty$ that avoids the singularity at $x=-b$.  Equivalently, the
  quasi-polynomials $\hh_n(x)$ satisfy
  \begin{equation}
    \label{eq:hHintorthog}
    \frac{1}{\sqrt{\pi}} \int_{-\infty}^\infty \hh_n(x) \hh_m(x)e^{-x^2}
    dx = \delta_{mn} p_2(-2n)\nu_{n},\quad m,n=0,1,2,\ldots
  \end{equation}
\end{prop}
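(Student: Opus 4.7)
My plan is to leverage the intertwining relation $\hh_n = \hA h_n$ from \eqref{eq:hhA2H} together with a Darboux-type factorization $\hA^\dag \hA = p_2(T_2;b)$, which---combined with the self-adjointness of $T_2$ and the identity $U^\dag = V$ with respect to the Hermite weight $e^{-x^2}dx$---reduces everything to the classical orthogonality \eqref{eq:hermorthog}. Explicitly, pushing $\hA$ across the integral and using $p_2(T_2) h_m = p_2(-2m) h_m$ should yield
\[\frac{1}{\sqrt{\pi}}\int \hh_n \hh_m\, e^{-x^2}\,dx \;=\; \frac{1}{\sqrt{\pi}}\int h_n \cdot (\hA^\dag \hA\, h_m)\, e^{-x^2}\,dx \;=\; p_2(-2m)\,\delta_{mn}\,\nu_n.\]
The two assertions \eqref{eq:thorthog} and \eqref{eq:hHintorthog} are trivially equivalent since $\th_{n+1}\th_{m+1}/(x+b)^2 = \hh_n\hh_m$ via $\th_{n+1} = (x+b)\hh_n$, so it suffices to establish the quasi-polynomial form.

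The first subtask is to verify the operator identity $\hA^\dag \hA = p_2(T_2;b)$. This is a direct computation in the Ore algebra of differential operators with rational coefficients: expanding $\hA^\dag\hA$ from \eqref{eq:hAdef} and \eqref{eq:hAadjdef} gives four terms, and one uses the commutators of $T_2$, $U$, and $V$ with the rational factor $(x+b)^{-1}$ to collapse the rational tail onto $-2(T_2+b^2)$, so that $\hA^\dag\hA = (T_2+b^2)^2 - 2(T_2+b^2) = p_2(T_2;b)$. This step is tedious but mechanical, and admits a sanity cross-check against the explicit form of $\hT_4 = \hA\hA^\dag$ displayed in \eqref{eq:hT4}.

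The main analytic obstacle is justifying the integration by parts that moves $\hA$ across, because $\hh_n$ has a simple pole at $x=-b$: the integrand $\hh_n \hh_m e^{-x^2}$ is therefore not locally integrable on the real line and, on a complex contour, could in principle depend on whether one passes above or below $-b$. By contrast, the right-hand side $h_n\cdot p_2(T_2)h_m\cdot e^{-x^2}$ is entire in $x$ with Gaussian decay at infinity, so its integral from $-\infty$ to $+\infty$ is manifestly contour-independent. Contour-independence of the left-hand side therefore reduces to proving that $\operatorname{Res}_{x=-b}(\hh_n\hh_m e^{-x^2}) = 0$. Writing $\hh_n(x) = c_n/(x+b) + d_n + O(x+b)$ near $x = -b$ (with $c_n = \th_{n+1}(-b)$ and $d_n = \th_{n+1}'(-b)$ by $\th_{n+1} = (x+b)\hh_n$), and expanding $e^{-x^2} = e^{-b^2}(1 + 2b(x+b) + \cdots)$, one computes this residue to be $e^{-b^2}(c_n d_m + c_m d_n + 2b c_n c_m)$, which vanishes exactly when $d_n = -b c_n$ for every $n$, i.e., when $\th_{n+1}'(-b) + b\,\th_{n+1}(-b) = 0$. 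This boundary identity is the hardest part: substituting the explicit formula \eqref{eq:tHdef} for $\th_{n+1}$ and simplifying, the condition collapses to the scalar identity $h_n(-b) + b\,h_{n-1}(-b) + \tfrac{n-1}{2}h_{n-2}(-b) = 0$, which is precisely the three-term recurrence \eqref{eq:hermrel3} applied to $h_{n-1}$ and evaluated at $x = -b$. Once the residue vanishes, integration by parts along any contour from $-\infty$ to $+\infty$ avoiding $-b$ picks up no singular contribution and endpoint terms vanish by Gaussian decay, so the computation concludes as above.
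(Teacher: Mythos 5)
Your proposal is correct and follows essentially the same route as the paper: the factorization $\hA^\dag\hA=p_2(T_2;b)$ together with the adjointness of $\hA$ and $\hA^\dag$ relative to the Hermite weight reduces everything to classical Hermite orthogonality. Your direct residue computation at $x=-b$ (which you correctly reduce to the three-term recurrence evaluated at $x=-b$) is a concrete verification of what the paper packages more formally as the statement that $\hA$ maps $\cP$ into the space $\cU_b$ characterized in Proposition \ref{prop:Ubres}, with contour-independence then supplied by Corollary \ref{cor:qorthog}.
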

\noindent
These kinds of integrals were used to describe generalized
orthogonality for exceptional Hermite polynomials in \cite{HHV16}.  By
inspection, the resulting inner product is not positive definite.
This fact is related to the presence of a singularity on the real
line.  See the just cited paper has some comments and citations
regarding this matter.
\subsubsection{Recurrence relation.}
Let $\fS_n$ denote the shift operator in the variable $n$, so that for
a sequence $f_n,\; n\in \Z$ we have
\begin{equation}
  \label{eq:fSdef}
  \fS_n f_n = f_{n+1},\quad \fS_n^kf_n = f_{n+k},\; k\in \Z.
\end{equation}
Define the difference operator
\begin{equation}
  \label{eq:Jdef}
 \oJ(n,\fS_n) = \fS_n + \frac{n}{2} \fS_n^{-1},\quad \oJ f_n =
  f_{n+1} + \frac{n}2 f_{n-1}.
\end{equation}
% so that, for a sequence $f_n,\; n\in \Nz$, we have
% \[ (\oJ f)_n = f_{n+1} + \frac{i}{2}\, f_{i-1}. \]
Also define the difference operators
%$\tU(n,\fS_n,b),U(n,\fS_n,b), \tV(n,\fS_n,b), V(n,\fS_n,b)$
\begin{equation}
  \label{eq:UVdef}
  \begin{aligned}
    \oU(n,\fS_n;b) &= b+ n \fS_n^{-1},& \hoU(n,\fS_n;b)&=(b^2-2n)^{-1} \oU\\
    \oV(n,\fS_n;b) &= 2\fS_n + b,& \hoV(n,\fS_n;b)&= (b^2-2n-2)^{-1} \oV.
  \end{aligned}
\end{equation}
\begin{prop}
    \label{prop:rr5}
    The  quasi-polynomials $\hh_n(x),\; n\in \Nz$ satisfy the
    following 5th order recurrence relation.
  \begin{equation}
    \small
    \label{eq:rr5}
    \begin{aligned}
      (x+b)^2 \hh_n &= (\oJ+b)^2\hh_n+ \lp \oV \hoV- \oU \hoU\rp \hh_n \\
      &= \hh_{n+2}+ 2 b \hh_{n+1}+ \lp b^2+n-\frac12\rp \hh_n
      + bn\hh_{n-1}+\frac14 n(n-1) \hh_{n-2}\\
      &\qquad + 2\hv_{n+1}+b\hv_n + b\hu_n + n \hu_{n-1}
    \end{aligned}
  \end{equation}
  where
  \begin{align}
    \label{eq:hvdef}
    \hv_n &= \hoV\hh_n = \frac{2\hh_{n+1}+b \hh_{n}}{b^2-2n-2}\\
    \label{eq:hudef}
    \hu_n &= \hoU\hh_n = \frac{b\hh_n+n \hh_{n-1}}{b^2-2n}.
  \end{align}
\end{prop}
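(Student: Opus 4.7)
The plan is to combine classical Hermite bispectrality with the intertwining $\hh_n = \hA h_n$, reducing the proposition to a commutator calculation and one algebraic identity. Classical bispectrality $xh_n=\oJ h_n$ gives $(x+b)^2 h_n=(\oJ+b)^2 h_n$ and, more generally, $(x+b) D h_n = D(\oJ+b)h_n$ for any difference operator $D$ in $n$, since multiplication by $x$ commutes with every such $D$. A direct commutator computation using $[T_2,(x+b)^2] = 2 + 4(x+b)\partial_x - 4x(x+b)$ and $[(x+b)^{-1}U,(x+b)^2] = 2$ yields $[\hA,(x+b)^2] = 4(x+b)(\partial_x-x)$, which rearranges using $V-U = 2(x-\partial_x)$ to $[(x+b)^2,\hA] = 2(x+b)(V-U)$. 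Since $\hA$ (a differential operator in $x$) commutes with every difference operator in $n$,
\begin{equation*}
    (x+b)^2\hh_n = \hA(x+b)^2 h_n + [(x+b)^2,\hA]h_n = (\oJ+b)^2\hh_n + 2(x+b)(V-U)h_n.
\end{equation*}

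It then suffices to verify the identity $(\oV\hoV-\oU\hoU)\hh_n = 2(x+b)(V-U)h_n$. Since $\hA$ commutes with difference operators in $n$, the left side equals $\hA\bigl[\oV\hoV h_n - \oU\hoU h_n\bigr]$. Writing $\hoV h_n = (b^2-2n-2)^{-1} V h_n$ and $\hoU h_n = (b^2-2n)^{-1} U h_n$, one expands $\oV\hoV h_n - \oU\hoU h_n$ as a linear combination of $Vh_{n+1}, Vh_n, Uh_n, Uh_{n-1}$ with explicit rational coefficients; substituting the Hermite raising/lowering relations $Vh_m = 2h_{m+1}+bh_m$ and $Uh_m = bh_m+mh_{m-1}$ turns this into a combination of $h_{n-2},\ldots,h_{n+2}$ with rational coefficients in $n$ and $b$. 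Applying $\hA$ term-by-term via $\hA h_m = (b^2-2m)h_m - (x+b)^{-1}(bh_m+mh_{m-1})$, the rational $(x+b)^{-1}$-singular contributions cancel identically and the polynomial remainder equals $4h_{n+2}+4bh_{n+1}+2h_n-2bn\,h_{n-1}-n(n-1)h_{n-2} = 2(x+b)(V-U)h_n$.

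The main obstacle is precisely this final cancellation of $(x+b)^{-1}$-singular parts: it is a nontrivial algebraic identity among rational functions of $n$ and $b$ that reflects the bispectral Darboux structure underpinning the construction (conceptually, it is the image of the classical relation $(x+b)^2 h_n = (\oJ+b)^2 h_n$ under the Darboux transformation). Once it is verified, combining with the commutator step yields $(x+b)^2\hh_n = [(\oJ+b)^2+\oV\hoV-\oU\hoU]\hh_n$. The expanded form displayed in \eqref{eq:rr5} follows from $\oJ^2 = \fS_n^2+n+\tfrac12+\tfrac{n(n-1)}{4}\fS_n^{-2}$, together with $\oV\hoV\hh_n = 2\hv_{n+1}+b\hv_n$ and $\oU\hoU\hh_n = b\hu_n+n\hu_{n-1}$ and the definitions \eqref{eq:hvdef}, \eqref{eq:hudef}.
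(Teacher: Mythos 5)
Your opening step coincides exactly with the paper's ``bispectral'' proof of this proposition in Section 5: the commutator identity $[(x+b)^2,\hA]=2(x+b)(V-U)$, combined with $(x+b)^2h_n=(\oJ+b)^2h_n$ and the fact that the differential operator $\hA$ commutes with difference operators in $n$, reduces the claim to the single identity
\[
\lp \oV\hoV-\oU\hoU\rp\hh_n \;=\; 2(x+b)(V-U)\,h_n .
\]
You have correctly isolated this as the crux, but you do not prove it: you defer it to a term-by-term expansion (``once it is verified\ldots''), and the mechanism you describe for that expansion is not what actually happens. Writing $\lp\oV\hoV-\oU\hoU\rp h_n$ as a combination of $h_{n-2},\ldots,h_{n+2}$ with coefficients built from $(b^2-2n-4)^{-1},\ldots,(b^2-2n+2)^{-1}$ and applying $\hA h_m=(b^2-2m)h_m-(x+b)^{-1}(bh_m+mh_{m-1})$ term by term, the $(x+b)^{-1}$ contributions do \emph{not} cancel identically. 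For example, the regular part alone contributes $2b\lp\tfrac{\beta-2}{\beta-4}+1\rp$ to the coefficient of $h_{n+1}$, where $\beta=b^2-2n$, which is not the required $4b$; the singular terms sum to $(x+b)^{-1}$ times a nonzero polynomial that happens to be divisible by $x+b$, and the resulting quotient supplies the missing pieces. So the brute-force route is feasible in principle but substantially more delicate than your sketch indicates, and as written the argument has a hole at its central step.

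The paper closes this step with two short operator identities that you should adopt. First, since $V+U=2(x+b)$ and $[U,V]=2$, one has $2(x+b)(V-U)=(V+U)(V-U)=V^2-U^2+2$; and with $\hU=U-(x+b)^{-1}$, $\hV=V+(x+b)^{-1}$ one also has $\hV V-\hU U=V^2-U^2+(x+b)^{-1}(V+U)=V^2-U^2+2$. Second, the intertwining identities $\hA U=\hU(T_2+b^2)$ and $\hA V=\hV(T_2+b^2-2)$ yield $\hU h_n=\hu_n$ and $\hV h_n=\hv_n$, whence $(\hV V-\hU U)h_n=2\hv_{n+1}+b\hv_n-\lp b\hu_n+n\hu_{n-1}\rp=\oV\hv_n-\oU\hu_n=\lp\oV\hoV-\oU\hoU\rp\hh_n$, with no rational-function bookkeeping at all. (Note also that the paper gives a second, independent proof by orthogonality: $(x+b)^2\hh_n$ lies in $\cU_b$ and is $\eta$-orthogonal to $\hh_m$ for $|m-n|>2$, so the five coefficients are read off from inner products; your route, once repaired as above, is the bispectral one.)
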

\noindent
Note that
the above defined $\oJ$ serves as the Jacobi operator for the monic
Hermite polynomials in the sense that the classical recurrence
relation \eqref{eq:hermrel3} may be rewritten as the eigenvalue relation
\[ \oJ h_n = x h_n.\]
As a consequence,
\begin{align*}
  (x+b)^2 h_n
  &   = (\oJ+b)^2 h_n 
  % &=
  %   h_{i+2}+ 2 b h_{n+1}+ \lp b^2+i+\frac12\rp h_n + bi
  %   h_{i-1}+ \frac14 i(i-1) h_{i-2}  
\end{align*}
Thus, \eqref{eq:rr5} may be justly regarded as a rational modification
of the above classical Hermite recurrence relation.  The recurrence
relation for the exceptional polynomials $\th_{n+1}(x),\; n=0,1,2,\ldots$
has the same form; one merely needs to multiply both sides of
\eqref{eq:rr5} by $(x+b)$ and conjugate the recurrence relation by a
shift operator in the index $n$.

\section{The proofs}
\label{sect:proofs}
We begin by justifying the form of the exceptional generating function.
\begin{proof}[Proof of Proposition \ref{prop:genfunc}]
By inspection,
\begin{align*}
  \partial_x \Psi&= z \Psi\\
  U \Psi &= (z+b)\Psi.
\end{align*}
Consequently, by \eqref{eq:T2Psiz},
\begin{equation}
  \label{eq:hPsidef2}
  \begin{aligned}
    \hA(x,\partial_x,b) \Psi(x,z)
    & = \lp T_2(x,\partial_x) + b^2 - \frac{z+b}{x+b}\rp
    \Psi(x,z)\\
    &= \hPsi(x,z;b)
  \end{aligned}
\end{equation}
  The desired conclusion now follows by \eqref{eq:Psidef} and
  \eqref{eq:hhA2H} .
\end{proof}
\noindent
The proof of the eigenvalue property relies on the following Lemmas.
\begin{lem}
  \label{lem:A2conj}
  Let $T_2,U,V$ be as defined in   \eqref{eq:T2def}, \eqref{eq:Uxdef}, \eqref{eq:Vxdef}.  We have
  \begin{align}
    \label{eq:T2UV}
    T_2+b^2
    &= -V U +2 b(x+b)\\
    \label{eq:UVJ}
    U+V
    &= 2(x+b)\\
    \label{eq:UVbrak}
    [U,V]
    &= 2\\
    \label{eq:Uconj}
    U \circ (x+b)^{-1}
    &=  (x+b)^{-1}U -(x+b)^{-2}\\
    \label{eq:Vconj}
    V \circ (x+b)^{-1}
    &=  (x+b)^{-1}V +(x+b)^{-2}\\
    \label{eq:A2conj}
    \hA \circ (x+b)
    &= (x+b)\hA^\dag\\
     \label{eq:T2conj}
    T_2\circ (x+b) &= (x+b) T_2 +U-V 
  \end{align}
\end{lem}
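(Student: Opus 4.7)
The plan is to verify each of the seven identities by direct computation from the definitions \eqref{eq:T2def}, \eqref{eq:Uxdef}, \eqref{eq:Vxdef}, \eqref{eq:hAdef}, and \eqref{eq:hAadjdef}, and then assemble the final identity \eqref{eq:A2conj} from the earlier ones. The whole lemma rests on the canonical commutation $[\partial_x,f(x)]=f'(x)$ applied repeatedly, so no technical innovation is required; the proof is essentially an exercise in careful operator bookkeeping.

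First I would dispatch the three ``algebraic'' identities in $U$, $V$, $T_2$. Adding the definitions of $U$ and $V$ gives \eqref{eq:UVJ} at once. The bracket \eqref{eq:UVbrak} follows by expanding $UV-VU$ and using $[\partial_x,x]=1$. For \eqref{eq:T2UV}, I would multiply out $-VU=-(-\partial_x+2x+b)(\partial_x+b)$: the $b$-independent piece reproduces $T_2=\partial_x^2-2x\partial_x$, and the remaining linear and quadratic terms in $b$ combine to $-b^2+2b(x+b)$.

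Next, the two Leibniz-type conjugations \eqref{eq:Uconj} and \eqref{eq:Vconj} are one-line consequences of $\partial_x\circ (x+b)^{-1}=(x+b)^{-1}\partial_x-(x+b)^{-2}$; the multiplication operators $b$ and $2x+b$ that appear in $U$ and $V$ commute with $(x+b)^{-1}$ and contribute nothing extra. In the same spirit, \eqref{eq:T2conj} reduces to $[T_2,x+b]=[\partial_x^2,x]-2x[\partial_x,x]=2\partial_x-2x$, which I then recognize as $U-V$.

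Finally, I would prove \eqref{eq:A2conj} by rewriting it as the equivalent commutator equation
\[
[T_2+b^2,\,x+b]\;=\;(x+b)^{-1}U(x+b)-(x+b)V(x+b)^{-1}.
\]
The left side equals $U-V$ by \eqref{eq:T2conj} (the $b^2$ piece commutes with $x+b$). For the right side, I would compute $(x+b)^{-1}U(x+b)=U+(x+b)^{-1}$ using $[U,x+b]=1$, and $(x+b)V(x+b)^{-1}=V+(x+b)^{-1}$ by multiplying \eqref{eq:Vconj} on the left by $x+b$; the two $(x+b)^{-1}$ terms cancel and the right side also reduces to $U-V$. The only place one can get tripped up is treating $\hA$, $\hA^\dag$, and $(x+b)^{\pm 1}$ as composed operators rather than as commuting symbols, so each identity must be read with the appropriate non-commutative precedence; beyond that, there is no genuine obstacle.
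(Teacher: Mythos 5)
Your proposal is correct and coincides with the paper's proof, which simply states that all seven identities ``follow from definitions and straight-forward calculations''; you have carried out exactly those calculations, including the right reduction of \eqref{eq:A2conj} to \eqref{eq:T2conj} together with the conjugation identities. (The only quibble is the phrasing of the $b$-dependent terms in your verification of \eqref{eq:T2UV} — the extra terms in $-VU$ beyond $T_2$ are $-b^2-2bx=b^2-2b(x+b)$, not $-b^2+2b(x+b)$ — but the computation itself is sound.)
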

\begin{proof}
  These identities follow from definitions and straight-forward
  calculations.
\end{proof}
\begin{lem}
  Let $p_2(T;b)$ be the second-degree polynomial defined in
  \eqref{eq:p2def}. We then have
  \begin{equation}
    \label{eq:pT02}
    \hA^\dag \hA = p_2(T_2).
  \end{equation}
\end{lem}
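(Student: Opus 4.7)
The plan is to expand $\hat{A}^\dag\hat{A}$ directly from the definitions and show that the cross-terms collapse to $-2S$, where $S:=T_2+b^2$, so that
$$\hat{A}^\dag\hat{A} = S^2-2S = S(S-2) = (T_2+b^2)(T_2+b^2-2)= p_2(T_2).$$
Writing $\hat{A} = S - (x+b)^{-1}U$ and $\hat{A}^\dag = S - V(x+b)^{-1}$, the product expands to
$$\hat{A}^\dag\hat{A}= S^2 - S(x+b)^{-1}U - V(x+b)^{-1}S + V(x+b)^{-2}U.$$
The $S^2$-term already accounts for the leading piece of $p_2(T_2)$, so the task reduces to proving the identity
$$S(x+b)^{-1}U + V(x+b)^{-1}S - V(x+b)^{-2}U = 2S.$$

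The key idea is to ``unwind'' both occurrences of $S$ in the cross-terms using the factorization \eqref{eq:T2UV}, i.e.\ $S = -VU + 2b(x+b)$. Substituting on the right in $S(x+b)^{-1}U$ and on the left in $V(x+b)^{-1}S$, the scalar pieces $2b(x+b)$ combine with the rational factors to yield $2b(U+V)$. Using \eqref{eq:UVJ}, this collapses to $4b(x+b)$, which is precisely what we need to recover the constant part of $2S = -2VU + 4b(x+b)$.

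What remains are the three operator terms
$$-VU(x+b)^{-1}U \;-\; V(x+b)^{-1}VU \;-\; V(x+b)^{-2}U.$$
The first and third terms combine after applying the commutation identity \eqref{eq:Uconj}, $U(x+b)^{-1} = (x+b)^{-1}U - (x+b)^{-2}$, which makes the $(x+b)^{-2}U$ contributions cancel and leaves $-V(x+b)^{-1}U^2$. Factoring out a common $V(x+b)^{-1}$ from this and the middle term produces $-V(x+b)^{-1}(U^2+VU) = -V(x+b)^{-1}(U+V)U$; invoking \eqref{eq:UVJ} once more replaces $U+V$ by $2(x+b)$, and the $(x+b)^{-1}(x+b)$ cancels, yielding $-2VU$. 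Adding this to the constant piece $4b(x+b)$ collected above gives exactly $2S$, as required.

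The only mildly delicate point is managing the non-commutativity when inserting $S=-VU+2b(x+b)$ into positions where it is flanked by $(x+b)^{\pm 1}$ and by $U$ or $V$; but since $2b(x+b)$ is a multiplication operator that commutes with $(x+b)^{-1}$, the substitutions go through without further commutator corrections. All remaining manipulations are elementary applications of $U+V=2(x+b)$, $[U,V]=2$, and \eqref{eq:Uconj} from Lemma \ref{lem:A2conj}, so there is no genuine obstacle beyond careful bookkeeping.
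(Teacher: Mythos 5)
Your proof is correct and follows essentially the same route as the paper: expand $\hA^\dag\hA$, substitute $T_2+b^2=-VU+2b(x+b)$ into the cross-terms, and use \eqref{eq:Uconj} together with $U+V=2(x+b)$ to collapse the rational terms to $2VU-4b(x+b)=-2(T_2+b^2)$. If anything, your write-up tracks the bookkeeping more carefully than the paper's (rather terse, typo-prone) display.
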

\begin{proof}
  Using \eqref{eq:hAdef} \eqref{eq:hAadjdef} \eqref{eq:T2UV}
  \eqref{eq:UVJ} \eqref{eq:UVbrak}, we have
  \begin{align*}
    \hA^\dag \hA
    &= (T^2+b^2-V(x+b)^{-1})(T^2+b^2-(x+b)^{-1} U)\\
    &=(T^2+b^2)^2 +V(x+b)^{-1}VU+VU(x+b)^{-1}V-4b(x+b)
      + V (x+b)^{-2} U
  \end{align*}
  By \eqref{eq:Uconj},
  \begin{align*}
    &V(x+b)^{-1}VU+UV(x+b)^{-1}V= V(x+b)^{-1}VU+V(x+b)^{-1}UV-
      V(x+b)^{-2} U\\
    &=2VU - V(x+b)^{-2} U.
  \end{align*}
  Hence, by \eqref{eq:T2UV},
  \begin{align*}
    \hA^\dag \hA
    &= (T^2+b^2)^2+2VU-4b(x+b)\\
    &=(T^2+b^2)^2- 2(T^2+b^2) = (T^2+b^2)(T^2+b^2-2).
  \end{align*}
\end{proof}
\begin{proof}[Proof of Proposition \ref{prop:hTeigen}]
  Identity \eqref{eq:pT02} implies the following  intertwining
  relation:
  \[ \hT \hA = \hA\, p_2(T_2) .\] The desired conclusion now follows by
  \eqref{eq:hhA2H} and because by \eqref{eq:hermde} we have
  \[ p_2(T_2) h_n = p_2(-2n) h_n,\quad n\in \Nz.\]
\end{proof}
\noindent
\begin{proof}[Proof of Propostion \ref{prop:tTeigen}]
  Since $\th_{n+1}(x) = (x+b) \hh_n(x)$, it suffices to show that
  \begin{equation}
    \label{eq:hcT4tcT4}
    (x+b)\hT_4 = \tT_4 \circ (x+b) .
  \end{equation}
  By \eqref{eq:tT4} and \eqref{eq:pT02},
  \[ \tT_4 = -3 \hA^\dag \hA+ 4(T_2+b^2-1)\hA .\]
  Hence by \eqref{eq:A2conj},
  \begin{align*}
    \tT_4 \circ (x+b) = -3 \hA^\dag\circ (x+b)\circ \hA^\dag
    +4(T_2+b^2-1)\circ (x+b)\circ \hA^\dag.
  \end{align*}
  By the definition \eqref{eq:hAadjdef},
  \begin{align*}
    &\hA^\dag \circ (x+b) = (T_2+b^2)\circ(x+b)-V
  \end{align*}
  Hence,   by \eqref{eq:T2conj} and \eqref{eq:UVJ},
  \begin{align*}
    \tT_4 \circ (x+b)
    &=\lp -3 \hA^\dag\circ (x+b)
    +4(T_2+b^2-1)\circ (x+b)\rp \circ \hA^\dag\\
    &=\lp (T_2+b)\circ (x+b)+3V-4(x+b))\rp \circ \hA^\dag\\
    &=\lp (x+b)(T_2+b)+U-V+3V-2U-2V)\rp \circ \hA^\dag\\
    &= (x+b)\hA \hA^\dag,
  \end{align*}
as was to be shown.
\end{proof}
\subsection{ Proof of Orthogonality}
\label{sec:genorthog}
In this section, we prove the quasi-polynomial orthogonality relations
\eqref{eq:hHintorthog}.  From that, the orthogonality of the
exceptional polynomials shown in \eqref{eq:thorthog} follows
immediately.  The first step is to make sense of the quadratic form
defined by the seemingly singular integral in \eqref{eq:hHintorthog}.
We do this by introducing a bilinear form on a certain vector space of
quasi-polynomials, and then showing that this quadratic form can be
realized by the integrals in question.

Let $\cP$ denote the vector space of complex-valued polynomials.  We
define a bilinear form on $\cP$ using the following 
definition:
\begin{equation}
  \label{eq:etadef}
   \eta(p_1,p_2) = \frac{1}{\sqrt{\pi}} \int_{-\infty}^\infty p_1(x)
  p_2(x) e^{-x^2} dx ,\quad p_1,p_2\in \cP.
\end{equation}
The above integral is path-independent and
gives the same value for every contour from $x=-\infty$ to
$x=+\infty$.  This bilinear form underlies the orthogonality of the
classical Hermite polynomials and has the following characterization
in terms of formal anti-derivatives.
% \footnote{If one evaluates the
%   indefinite integral of $\int p_1(x) p_2(x) e^{-x^2}dx$ in
%   Mathematica, the output will be a sum of a quasi-rational term in
%   $\hcP$ plus the expression
%   $\eta(p_1,p_2) \frac{\sqrt{\pi}}{2} \Erf(x)$.  Thus, one can recover
%   $\eta(p_1,p_2)$ by asking for the coefficient of $\Erf(x)$ times
%   $2/\sqrt{\pi}$.}.

Going forward, we will say that an expression $W(x)$ is
\emph{quasi-rational} provided the log derivative $W'(x)/W(x)$ is a
rational function.  Let
\[ \hcP(x) = \cP(x)     e^{-x^2} = \{ p(x) e^{-x^2}: p \in \cP \} \]
denote the vector space of the indicated quasi-rational functions.
\begin{prop}
  \label{prop:etaP}
  For every $p_1,p_2\in \cP$ the value $\eta(p_1,p_2)$ defined by
  \eqref{eq:etadef} is the unique constant such that
  \[ \int \lp p_1(x)p_2(x)- \eta(p_1,p_2)\rp e^{-x^2} dx \]
  defines a quasi-rational anti-derivative.  That anti-derivative is
  unique and belongs to $\hcP(x)$.
\end{prop}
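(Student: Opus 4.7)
The plan is to reduce the proposition to a simple analysis of the linear map $D:\cP\to\cP$ defined by $D(q) := q' - 2xq$. The crucial identity is $(q(x) e^{-x^2})' = D(q)(x) e^{-x^2}$, which links antiderivatives of the form $q e^{-x^2}\in\hcP(x)$ directly to the image of $D$. Integrating this identity from $-\infty$ to $+\infty$ and using the fact that $q(x) e^{-x^2}$ vanishes at both ends also shows that $\eta(1,D(q)) = 0$ for every $q\in\cP$.

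The structural claim I would establish is that $D$ is injective with image exactly equal to $\{p\in\cP : \eta(1,p) = 0\}$. Injectivity and the fact that $D$ raises degree by exactly one (multiplying the leading coefficient by $-2$) imply that $D$ restricted to polynomials of degree at most $n-1$ has image of codimension one in the space of polynomials of degree at most $n$. This image already lies in the codimension-one kernel of the functional $\eta(1,\cdot)$, so the two must coincide. It then follows that for every polynomial $p$ there exist a unique scalar $c$ and a unique $q\in\cP$ with $p - c = D(q)$, and necessarily $c = \eta(1,p)$. Taking $p = p_1 p_2$ produces the constant $c = \eta(p_1,p_2)$ and the antiderivative $q(x) e^{-x^2}\in\hcP(x)$. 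Uniqueness within $\hcP(x)$ is then immediate, since any two such antiderivatives differ by a constant $c_0$, and $c_0 = (q_1-q_2)e^{-x^2}$ forces $q_1 - q_2 = c_0 e^{x^2}$, which is polynomial only if $c_0 = 0$.

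The main obstacle is to rule out quasi-rational antiderivatives not of the form $q e^{-x^2}$ with polynomial $q$. Suppose $F$ is quasi-rational with $F' = (p_1 p_2 - c) e^{-x^2}$. If $F$ is constant then $p_1 p_2 = c$ trivially equals $\eta(p_1,p_2)$; otherwise the defining relation $F'/F = r(x)$ with $r$ rational forces $F = F'/r = s(x) e^{-x^2}$ for some rational function $s$, and substituting back gives $s' - 2xs = p_1 p_2 - c$, whose right-hand side is a polynomial. A pole-order count finishes the argument: at any pole of $s$ of order $k\geq 1$, the term $s'$ contributes a pole of order $k+1$ while $-2xs$ has a pole of order at most $k$, producing an unwanted pole of order $k+1$ on the left. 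Hence $s\in\cP$, so $F\in\hcP(x)$, and the previous paragraph forces $c = \eta(p_1,p_2)$. The only subtlety is this pole-order count; everything else is routine linear algebra.
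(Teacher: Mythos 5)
Your argument is correct, and it is worth noting that the paper states this proposition without any proof at all; the route it implicitly has in mind is the one used for the analogous Proposition \ref{prop:etaUb}, namely expanding $p_1p_2$ in Hermite polynomials, observing that the $h_0$-coefficient is $\eta(p_1,p_2)$, and integrating each $h_n(x)e^{-x^2}$, $n\ge 1$, in closed form via the Rodriguez formula \eqref{eq:hermrod}, which gives $\int h_n(x)e^{-x^2}\,dx = -\tfrac12 h_{n-1}(x)e^{-x^2}\in\hcP(x)$. You instead work with the operator $D(q)=q'-2xq$, characterizing its image as the kernel of $\eta(1,\cdot)$ by an injectivity-plus-dimension count; this is essentially the same existence statement in different clothing (the Hermite basis diagonalizes the bookkeeping, your $D$ avoids it), and either is fine. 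Where your write-up genuinely adds something is the last paragraph: the proposition asserts uniqueness of the anti-derivative among \emph{all} quasi-rational functions, not just within $\hcP(x)$, and your reduction of a quasi-rational $F$ with $F'=(p_1p_2-c)e^{-x^2}$ to $F=s(x)e^{-x^2}$ with $s$ rational, followed by the pole-order count showing $s'-2xs$ acquires a pole of order $k+1$ at any pole of $s$ of order $k\ge 1$, is exactly the step needed to close that gap; the Hermite-expansion route would require the same argument. The pole count is sound (the $s'$ term dominates since $-2xs$ has pole order at most $k$), and your handling of the degenerate constant-$F$ case and of uniqueness within $\hcP(x)$ (a constant equals $(q_1-q_2)e^{-x^2}$ only if both vanish) is correct.
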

\noindent
The next step is to extend this bilinear form to the quasi-polynomials
defined in \eqref{eq:H1def}.  For a rational function $f(x)$, let
\[  f(x) \hcP(x) =\{ p(x)f(x)
  e^{-x^2}\colon p\in \cP \},\quad n\in \Nz
\] be the vector space of consisting of quasi-rational expressions of
the indicated form.  Observe that the derivatives of elements of
$(x+b)^{-1} \hcP(x)$ (i) belong to $(x+b)^{-2}\hcP(x)$, and (ii) have
a vanishing residue at $x=-b$.  Conversely, the following holds.
\begin{prop}
  A quasi-rational $\phi(x)\in (x+b)^{-2} \hcP(x)$ has a single-valued
  anti-derivative if and only if
  \[ \Res{x=-b} \phi(x)=0. \] In such a case, there exists a unique
  anti-derivative that has the form of  a quasi-rational
  expression in $(x+b)^{-1}\hcP(x)$ plus a constant times $\Erf(x)$.
\end{prop}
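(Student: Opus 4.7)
The plan is to treat the two directions of the equivalence separately, then construct the antiderivative explicitly via a partial-fraction decomposition. The ``only if'' direction is immediate from complex analysis: any element of $(x+b)^{-2}\hcP(x)$ is meromorphic on $\C$ with a single singularity, of order at most two, at $x=-b$, so a nonzero residue there produces nontrivial monodromy under analytic continuation and precludes a single-valued primitive.

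For the ``if'' direction, write $\phi(x)=p(x)(x+b)^{-2}e^{-x^2}$ with $p\in\cP$ and perform polynomial division $p(x)=(x+b)^2 q(x)+c_1(x+b)+c_0$. Expanding $e^{-x^2}$ to first order at $x=-b$ gives
$\Res{x=-b}\phi=(c_1+2bc_0)e^{-b^2}$, so the residue condition amounts to $c_1+2bc_0=0$. The elementary identity
\[
\frac{d}{dx}\!\left[\frac{e^{-x^2}}{x+b}\right]=-\frac{e^{-x^2}}{(x+b)^2}-2e^{-x^2}+\frac{2b\,e^{-x^2}}{x+b}
\]
then allows me to rewrite the singular part of $\phi$ as $-c_0\tfrac{d}{dx}[(x+b)^{-1}e^{-x^2}]-2c_0 e^{-x^2}+(c_1+2bc_0)(x+b)^{-1}e^{-x^2}$, where the last term is killed by the residue condition. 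Integrating yields $-c_0(x+b)^{-1}e^{-x^2}-c_0\sqrt{\pi}\,\Erf(x)$ for this piece. The regular remainder $q(x)e^{-x^2}$ is handled by Proposition \ref{prop:etaP} (applied with $p_1=q,\ p_2=1$), producing an antiderivative of the form $Q(x)e^{-x^2}+\tfrac{\sqrt{\pi}}{2}\eta(q,1)\Erf(x)$ with $Q\in\cP$. Since $\hcP(x)\subset (x+b)^{-1}\hcP(x)$ via $p\,e^{-x^2}=(x+b)p\cdot(x+b)^{-1}e^{-x^2}$, the total antiderivative lives in $(x+b)^{-1}\hcP(x)+\C\cdot\Erf(x)$ as required.

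For uniqueness, suppose $F_j=f_j+c_j\Erf(x)$, $j=1,2$, are two antiderivatives of the stated form. Their difference is a constant, and sending $x\to\pm\infty$ together with the decay $f_j\to 0$ (from the $e^{-x^2}$ factor in each $f_j$) and $\Erf(x)\to\pm 1$ forces first $c_1=c_2$ and then $f_1=f_2$. The main delicacy, I expect, is the bookkeeping of the residue computation and the verification that the derivative identity above delivers precisely the cancellation demanded by the residue condition; otherwise the argument is entirely elementary, reducing to partial fractions together with a single invocation of Proposition \ref{prop:etaP}.
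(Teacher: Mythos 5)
Your argument is correct. Note that the paper states this proposition without proof, so there is nothing to compare against directly; however, your key ingredients — the partial-fraction split of the numerator at $x=-b$, the identity $\partial_x\lp e^{-x^2}/(x+b)\rp=-\,(x+b)^{-2}e^{-x^2}-2e^{-x^2}+2b(x+b)^{-1}e^{-x^2}$, and the reduction of the regular remainder to Proposition \ref{prop:etaP} — are exactly the devices the paper deploys in its proof of the adjacent Proposition \ref{prop:etaUb}, so your write-up is a faithful filling-in of the intended (omitted) argument. The residue computation $\Res{x=-b}\phi=(c_1+2bc_0)e^{-b^2}$, the monodromy argument for the ``only if'' direction, and the uniqueness argument via the limits $\Erf(x)\to\pm1$ and the decay of the quasi-rational part are all sound.
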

\noindent
\begin{definition}
  Let $U(x,\partial_x)=\partial_x+b$ be the differential operator
  defined in \eqref{eq:Uxdef}. For $b\in \C$, let
  $\gamma_b= \ev_{-b}\circ U$ be the functional on $\cP$ defined by
  \begin{equation}
    \label{eq:gammabdef}
    \gamma_b(p) = (U p)(-b)=  p'(-b) + b p(-b),\quad
    p \in \cP.
  \end{equation}
  Let
  \begin{equation}
    \label{eq:Ubdef}
    \cU_b(x) = (x+b)^{-1} \ker \gamma_b = \{ (x+b)^{-1}p(x): p\in
    \cP,\; p'(-b) + b p(-b) = 0 \},\quad b\in \C
  \end{equation}
  be the subspace of quasi-polynomials in $(x+b)^{-1}\cP(x)$ whose
  numerator $p(x)$ satisfies the 1-point condition $\gamma_b(p) = 0$.
\end{definition}

\tre{\textbf{Internal remark.}
  \begin{equation}
    \label{eq:gammabdef}
    \gamma_b(p) = -\Res{x=-b} \hA p(x)
  \end{equation}}

% \begin{prop}
%   The vector space $\cU_b$ can be characterized as 
% \end{prop}
% \begin{proof}
%   Let $q\in \cU_b$ so that, by assumption, $(\hA^\dag q)(x)$ has no
%   poles. By inspection of \eqref{eq:hAadjdef}, necessarily $q(x)$ must
%   be non-singular for $x\ne b$. The singular part of $\hA^\dag$ is
%   \[ \hU - (x+b)^{-2} = (x+b)^{-1}\partial_x-(x+b)^{-2} +
%     b(x+b)^{-1}.\] Hence, necessarily $q(x) =(x+b)^{-1}p(x),\; p\in \cP$; a
%   higher-order pole would produce a singularity. By \eqref{eq:A2conj},
%   we have
%   \[ \hA^\dag \lp (x+b)^{-1}p\rp = (x+b)^{-1}\hA p.\] Hence, it
%   suffices to show that $\hUp$ is a polynomial.  That is equivalent
%   the condition that $p'(-b) + bp(b) = 0$.
% \end{proof}
\noindent
The following characterization of $\cU_b$ is crucial to all that
follows.
\begin{prop}
  \label{prop:Ubres}
  Let $q(x)\in (x+b)^{-1} \cP(x)$. Then, $q\in \cU_b$ if and only if
  \[ \Res{x=-b} \lp q(x)^2 e^{-x^2}\rp=0.\]
\end{prop}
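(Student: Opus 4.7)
The plan is to reduce the iff to a single residue computation. Writing $q = p/(x+b)$ for the unique $p \in \cP$ that represents $q$, the definition of $\cU_b$ directly translates $q \in \cU_b$ into $\gamma_b(p) = p'(-b) + b\,p(-b) = 0$. So the heart of the proof is to identify $\Res{x=-b}\lp q(x)^2 e^{-x^2}\rp$ with a scalar multiple of $\gamma_b(p)$.

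To carry this out, I would set $F(x) := p(x)^2 e^{-x^2}$, which is entire, so that $q(x)^2 e^{-x^2} = F(x)/(x+b)^2$ has at worst a double pole at $x=-b$ whose residue is exactly $F'(-b)$. The product rule gives
\[
F'(x) = 2 e^{-x^2}\bigl[p(x)\, p'(x) - x\, p(x)^2\bigr],
\]
and evaluation at $x=-b$ collapses this to
\[
F'(-b) = 2 e^{-b^2}\, p(-b)\bigl[p'(-b) + b\,p(-b)\bigr] = 2 e^{-b^2}\, p(-b)\, \gamma_b(p).
\]

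With this identity the forward direction is immediate. For the converse, vanishing of the residue forces $p(-b)\,\gamma_b(p)=0$, so in the generic case $p(-b)\ne 0$ one concludes $\gamma_b(p)=0$, i.e., $q\in\cU_b$, as required. The main subtlety I foresee is the degenerate stratum $p(-b)=0$: there $q$ is in fact a polynomial, the residue vanishes for free, and one recovers the equivalence only after recognizing that $p=(x+b)r$ with $r\in\cP$ gives $q=r$ and $\gamma_b(p)=r(-b)=q(-b)$. So on this locus the proposition is naturally read as the separate assertion $q(-b)=0$, and I would handle it by restricting attention to $q$ with a genuine simple pole at $x=-b$ (which is the context in which the subsequent orthogonality application will invoke the result).
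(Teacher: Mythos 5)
Your computation coincides with the paper's: both arguments reduce the statement to the identity $\Res{x=-b}\lp q(x)^2e^{-x^2}\rp = 2e^{-b^2}\,p(-b)\,\gamma_b(p)$, the paper obtaining it by Laurent-expanding $q$ and Taylor-expanding $e^{-x^2}$ about $x=-b$, and you by the double-pole formula $\Res{x=-b} F(x)/(x+b)^2 = F'(-b)$ applied to $F=p^2e^{-x^2}$; these are the same calculation in different packaging. The caveat you raise about the stratum $p(-b)=0$ is not a defect of your write-up but a genuine issue with the proposition as stated, which the paper's proof papers over: its final sentence asserts the residue vanishes iff $a_1+ba_0=0$, silently discarding the factor $a_0$ from its own formula, and indeed $q=1=(x+b)/(x+b)$ has vanishing residue yet $\gamma_b(x+b)=1\neq 0$, so $q\notin\cU_b$. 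Since only the implication $q\in\cU_b\Rightarrow \Res{x=-b}\lp q^2e^{-x^2}\rp=0$ (together with the local normal form of elements of $\cU_b$) is used later, in Proposition \ref{prop:etaUb} and in the orthogonality argument, and since the $\hh_n$ generically carry an honest simple pole at $x=-b$, the restriction you propose is exactly the right repair and nothing downstream is affected.
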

\begin{proof}
  Write $q(x) = (x+b)^{-1} p(x)$ and let $a_0 = p(-b)$ and
  $a_1 = p'(-b)$ so that
  \[ q(x) = \frac{a_{0}}{x+b}+ a_1   + O(x+b).\]
  Hence, 
  \begin{align*}
    q(x)^2 e^{-x^2}
    &\equiv \lp \frac{a_{0}}{x+b}+ a_1 \rp ^2 (1+2b(b+x))e^{-b^2}+ O(1)\\
    &\equiv \lp\frac{a_{0}^2}{(x+b)^2} + \frac{2a_{0}(a_{0} b +
      a_1)}{x+b}\rp e^{-b^2} + O(1)
  \end{align*}
  Hence, the residue in question vanishes if and only if $a_1+b a_0=0$.
\end{proof}

\begin{prop}
  \label{prop:etaUb}
  For every $q_1,q_2\in \cU_b$ there exists a unique constant
  $\teta_b(q_1,q_2)$ such that\tre{\footnote{If one submits such an evaluation
    to Mathematica, the output will be a sum of a quasi-rational term
    in $\hcP$ plus the expression
    $\eta(q_1,q_2) \frac{\sqrt{\pi}}{2} \Erf(x)$.  Thus, one can
    recover $\eta(q_1,q_2)$ by asking for the coefficient of $\Erf(x)$
    times $2/\sqrt{\pi}$.}}
  \[ \int \lp q_1(x)q_2(x)-    \teta_b(q_1,q_2)\rp e^{-x^2} dx \in
    (x+b)^{-1}\hcP(x).\] The 
  correspondence $(q_1,q_2) \mapsto \teta_b(q_1,q_2)$ defines a bilinear
  form on $\cU_b$.
\end{prop}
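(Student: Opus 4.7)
The plan is to reduce the existence and uniqueness of $\teta_b(q_1,q_2)$ to the anti-derivative proposition stated immediately above, which classifies integrable elements of $(x+b)^{-2}\hcP(x)$, and to verify the hypothesis of that proposition by polarizing the residue calculation from Proposition \ref{prop:Ubres}. Bilinearity will then fall out of the uniqueness in that decomposition.

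First, I would observe that any $q\in \cU_b$ has the form $q(x)=(x+b)^{-1}p(x)$ with $p\in \ker\gamma_b$, so $q_1(x)q_2(x)e^{-x^2} \in (x+b)^{-2}\hcP(x)$. Repeating the Taylor expansion in the proof of Proposition \ref{prop:Ubres}, but with two distinct factors, yields
\[
\Res{x=-b} q_1(x) q_2(x) e^{-x^2} = e^{-b^2}\bigl(\gamma_b(p_1)\, p_2(-b) + p_1(-b)\,\gamma_b(p_2)\bigr),
\]
which vanishes by membership in $\ker\gamma_b$. The preceding proposition on single-valued anti-derivatives then delivers a unique anti-derivative of $q_1 q_2 e^{-x^2}$ of the form $F(x) + c\,\Erf(x)$, with $F\in (x+b)^{-1}\hcP(x)$ and $c\in\C$. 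Since $\int e^{-x^2}dx = \tfrac{\sqrt{\pi}}{2}\Erf(x) + \text{const}$, the choice $\teta_b(q_1,q_2) := 2c/\sqrt{\pi}$ is the unique constant that cancels the $\Erf$ contribution, so that $\int(q_1 q_2 - \teta_b(q_1,q_2))e^{-x^2}dx$ lies in $(x+b)^{-1}\hcP(x)$. Any other candidate would differ by some $\delta\ne 0$ and would re-introduce a nonzero $\Erf$ term, contradicting the prescribed target space.

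For bilinearity, note that $(q_1,q_2)\mapsto q_1 q_2 e^{-x^2}$ is plainly bilinear on $\cU_b$, and extracting the coefficient $c$ from the unique decomposition $F + c\,\Erf$ is a linear operation, so $\teta_b$ inherits bilinearity. The only genuine obstacle is the residue computation; the diagonal case has already been recorded in Proposition \ref{prop:Ubres}, and the main point to notice is that the linearity of $\gamma_b$ makes the polarized residue vanish. Everything else is routine bookkeeping using the prior anti-derivative proposition.
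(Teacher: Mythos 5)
Your argument is correct, but it routes through different machinery than the paper's proof. The paper first polarizes at the level of the form itself (reducing to the diagonal case $q_1=q_2$), then uses the explicit Laurent data $q(x)=\tfrac{a_0}{x+b}-ba_0+(x+b)p(x)$ to split $q(x)^2e^{-x^2}$ into the exact derivative $-a_0^2\,\partial_x\lp e^{-x^2}/(x+b)\rp$ plus a term $r(x)e^{-x^2}$ with $r\in\cP$, and finally falls back on Proposition \ref{prop:etaP} (Hermite expansion of $r$) to extract the constant. You instead polarize at the level of the residue, proving the off-diagonal identity $\Res{x=-b} q_1q_2e^{-x^2}=e^{-b^2}\lp\gamma_b(p_1)p_2(-b)+p_1(-b)\gamma_b(p_2)\rp$ directly (your formula checks out: the $2b$ cross term from expanding $e^{-x^2}$ at $x=-b$ is exactly what assembles $p_i'(-b)+bp_i(-b)$ into $\gamma_b(p_i)$), and then invoke the unnumbered proposition classifying single-valued anti-derivatives in $(x+b)^{-2}\hcP(x)$ to obtain the decomposition $F+c\Erf(x)$ in one step. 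What your approach buys is that it handles the bilinear case uniformly without a separate polarization step, makes the role of the condition $\gamma_b(p)=0$ transparent, and gets bilinearity for free from the linearity of the projection onto the $\Erf$-coefficient; what the paper's approach buys is that it is self-contained (it constructs the anti-derivative explicitly rather than citing the classification result, whose proof is not spelled out in the text). The one point you pass over quickly is why a nonzero $\Erf$ term cannot itself lie in $(x+b)^{-1}\hcP(x)$ modulo constants (needed for uniqueness); this is immediate from the boundary values of $\Erf$ at $\pm\infty$ versus the decay of elements of $\hcP(x)$, and the paper is equally silent on it, so it is not a gap worth flagging.
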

\begin{proof}
  Without loss of generality, assume that $q_1 = q_2$.  Once we have
  established the existence of the quadratic form $\teta_b(q,q)$, the
  case of the bilinear form follows by a polarization argument that
  expands $\teta_b(q_1+q_2,q_1+q_2)$.

  Suppose that $q\in \cU_b$.  By the argument from the proof of
  Proposition \ref{prop:Ubres}, if $q\in\cU_b$, then
  \[ q(x) = \frac{a_0}{x+b} - b a_0 + (x+b) p(x),\quad p \in \cP.\]
  Hence,
  \begin{align*}
    q(x)^2 e^{-x^2}
    &= a_0^2\lp\frac{1}{(x+b)^2} - \frac{2b}{x+b} \rp e^{-x^2}+r(x) e^{-x^2}\\
    &=      -a_0^2\,\partial_x \lp \frac{e^{-x^2}}{x+b}  \rp
    +   r(x) e^{-x^2},
  \end{align*}
  where $r\in \cP$ is a polynomial.   Expand $r(x)$ in Hermite
  polynomials.  The desired constant is then a multiple of the
  coefficient of $h_0(x)$.
\end{proof}

\begin{cor}
  \label{cor:qorthog}
  Let $q_1,q_2\in \cU_b$. Then, for every contour from $x=-\infty$ to
  $x=\infty$ that avoids the singularity, we have
  \[ \teta_b(q_1,q_2) = \frac1{\sqrt{\pi}}\int_{-\infty}^{\infty}
      q_1(x)q_2(x)e^{-x^2} dx .\]
\end{cor}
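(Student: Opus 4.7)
The plan is to upgrade the antiderivative statement of Proposition~\ref{prop:etaUb} to the desired definite integral formula by evaluating endpoint limits, handling the potential pole at $x=-b$ via a residue argument.

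First, I would rewrite Proposition~\ref{prop:etaUb} as an indefinite-integral identity. Since $\frac{d}{dx}\bigl(\tfrac{\sqrt{\pi}}{2}\Erf(x)\bigr) = e^{-x^2}$, that proposition guarantees the existence of some $F\in (x+b)^{-1}\hcP(x)$ and a constant $C$ such that
\[
  \int q_1(x)q_2(x)\, e^{-x^2}\, dx \;=\; F(x) \;+\; \teta_b(q_1,q_2)\,\tfrac{\sqrt{\pi}}{2}\Erf(x) \;+\; C
\]
holds as an identity of antiderivatives in any simply connected domain avoiding $x=-b$.

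Second, I would establish that the integral along any contour from $-\infty$ to $+\infty$ avoiding $x=-b$ is well-defined and path-independent. The integrand $q_1 q_2 e^{-x^2}$ is meromorphic on $\C$ with at most a single pole at $x=-b$, so any two admissible contours differ by a closed loop encircling $x=-b$. By the residue theorem, the difference of the two integrals is a multiple of $\Res_{x=-b}\bigl(q_1(x)q_2(x)e^{-x^2}\bigr)$. I would invoke Proposition~\ref{prop:Ubres} together with a polarization argument: the assignment $(q_1,q_2)\mapsto \Res_{x=-b}(q_1 q_2 e^{-x^2})$ is $\C$-bilinear and symmetric, and the polarization identity $4 q_1 q_2 = (q_1+q_2)^2 - (q_1-q_2)^2$ combined with Proposition~\ref{prop:Ubres} (applied to $q_1+q_2$ and $q_1-q_2$, which both lie in $\cU_b$) implies that the mixed residue vanishes.

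Third, I would evaluate the limits as $x\to\pm\infty$ along the real line. The quasi-rational piece $F(x)=(x+b)^{-1}p(x)e^{-x^2}$ decays to $0$ at both ends thanks to the Gaussian factor, while $\Erf(\pm\infty)=\pm 1$. Subtracting endpoint values gives
\[
  \int_{-\infty}^{\infty} q_1(x)q_2(x)\, e^{-x^2}\, dx \;=\; \teta_b(q_1,q_2)\,\tfrac{\sqrt{\pi}}{2}\cdot 2 \;=\; \sqrt{\pi}\,\teta_b(q_1,q_2),
\]
and dividing by $\sqrt{\pi}$ yields the claim.

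The main technical obstacle is the polarization step justifying path-independence, since Proposition~\ref{prop:Ubres} is stated only for diagonal residues of the form $\Res_{x=-b}(q^2 e^{-x^2})$. This is resolved at once by the $\C$-linearity of $\Res_{x=-b}$ together with the polarization identity, so the argument is essentially a repackaging of results already in hand rather than the introduction of new machinery. All remaining verifications (decay of $F$ at infinity, the asymptotics of $\Erf$, and the residue-theorem bookkeeping) are routine.
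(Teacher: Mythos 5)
Your argument is correct and is essentially the proof the paper intends (the paper states the corollary without proof, treating it as immediate from Proposition~\ref{prop:etaUb}): the antiderivative is a quasi-rational term that vanishes at $\pm\infty$ plus $\teta_b(q_1,q_2)\,\tfrac{\sqrt{\pi}}{2}\Erf(x)$, so evaluating at the endpoints gives the formula. Your explicit polarization of Proposition~\ref{prop:Ubres} to kill the mixed residue is a valid way to get path-independence, though it is already implied by the single-valuedness of the antiderivative furnished by Proposition~\ref{prop:etaUb}.
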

\noindent
In this sense, we have a generalization of \eqref{eq:etadef} that
extends the integral inner product to quasi-polynomials in $\cU_b$.
If there is no risk of ambiguity, going forward we will simply write
$\eta$ instead of $\teta_b$ since the formally defined bilinear form
and the bilinear form defined by an integral are equivalent.

Next, we justify the ``adjoint'' nomenclature used in definition
\eqref{eq:hAadjdef}.
\begin{lem}
  \label{lem:A2adj}
  The operator $\hA$ maps $\cP$ to $\cU_b$, while $\hA^\dag$ maps
  $\cU_b$ to $\cP$.  Moreover, the following adjoint relation holds:
  \begin{align}
    \label{eq:A2adj}
    \eta(\hA f,g) &= \eta(f,\hA^\dag g),\quad f\in \cP,\; g\in \cU_b.
  \end{align}
\end{lem}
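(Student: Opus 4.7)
My plan divides into three stages. \textbf{Stage 1: the mapping properties.} For $f \in \cP$ I write $\hA f = p(x)/(x+b)$ with $p = (x+b)(T_2+b^2)f - Uf \in \cP$. A direct evaluation yields $p(-b) = -\gamma_b(f)$ and $p'(-b) = (T_2+b^2)f|_{x=-b} - (Uf)'(-b) = b\gamma_b(f)$, so $p'(-b) + bp(-b) = 0$ identically in $f$ and $\hA f \in \cU_b$. For the reverse direction, given $g = q/(x+b) \in \cU_b$ I invoke the intertwining identity \eqref{eq:A2conj} to write $\hA^\dag g = (x+b)^{-1}\hA q$, so that $\hA^\dag g = p(x)/(x+b)^2$ with the same expression $p = (x+b)(T_2+b^2)q - Uq$. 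The hypothesis $\gamma_b(q)=0$ now forces both $p(-b) = -\gamma_b(q) = 0$ and $p'(-b) = b\gamma_b(q) = 0$, so $p$ vanishes to order two at $-b$ and $\hA^\dag g$ is genuinely polynomial.

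\textbf{Stage 2: a Lagrange-type concomitant.} Since $T_2$ is formally self-adjoint and $U^\dag = V$ with respect to the weight $e^{-x^2}$ (each following from one integration by parts), I aim to establish the pointwise identity
\[ [(\hA f)g - f(\hA^\dag g)]\, e^{-x^2} = \partial_x \bigl[ (f'g - fg' - (x+b)^{-1}fg)\, e^{-x^2} \bigr]. \]
The $T_2$ contribution reduces to $\partial_x[(f'g-fg')e^{-x^2}]$ by the standard Hermite Wronskian calculation, and the cross term $-g(x+b)^{-1}Uf + fV((x+b)^{-1}g)$ expands and simplifies---with the doubly-singular $(x+b)^{-2}fg$ pieces cancelling---into $-\partial_x[(x+b)^{-1}fg\,e^{-x^2}]$.

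\textbf{Stage 3: integration and conclusion.} Integrating the Lagrange identity along any contour $C$ from $-\infty$ to $+\infty$ that avoids $-b$, the right side reduces to the boundary values of the concomitant at $\pm\infty$, which vanish by Gaussian decay. Hence $\int_C (\hA f)g\, e^{-x^2}\, dx = \int_C f(\hA^\dag g)\, e^{-x^2}\, dx$. The left integral equals $\sqrt{\pi}\,\eta(\hA f, g)$ by Corollary \ref{cor:qorthog}, applicable since both $\hA f$ and $g$ lie in $\cU_b$ and Proposition \ref{prop:Ubres} ensures $(\hA f) g\, e^{-x^2}$ has no residue at $-b$, so the contour choice is immaterial. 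The right integrand is a polynomial times Gaussian, hence smooth and integrable along the real line, giving $\sqrt{\pi}\,\eta(f, \hA^\dag g)$. The main obstacle I anticipate is the bookkeeping in Stage 2: one must verify that the doubly-singular cross terms cancel precisely, so the concomitant is a single total derivative of a decaying quantity; without that cancellation the contour integration would pick up stray residue contributions at $-b$ and the adjoint identity would fail.
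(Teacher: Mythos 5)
Your proof is correct and follows essentially the same route as the paper: the same Lagrange concomitant $\lp f'g - fg' - (x+b)^{-1}fg\rp e^{-x^2}$ underlies the paper's two pointwise adjointness identities, and your conclusion via contour integration of the exact derivative is equivalent to the paper's appeal to the formal antiderivative characterizations of $\eta$. The only substantive difference is that you supply an explicit verification of the mapping properties (which the paper asserts without proof), including the clean use of \eqref{eq:A2conj} to show that $\hA^\dag$ carries $\cU_b$ into $\cP$.
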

\begin{proof}
  Let $U(x,\partial_x;b),V(x,\partial_x;b)$ be the differential
  operators as defined in \eqref{eq:Uxdef} \eqref{eq:Vxdef}.  The
  following identity can be verified by direct calculation:
  \[ (Uf(x))g(x) -f(x) (Vg(x)) = \partial_x \lp f(x) g(x)
    e^{-x^2}\rp\] In this sense $V=U^\dag$ is the adjoint of $U$.  In
  the same sense $T_2^\dag = T_2$ and
  $((x+b)^{-1}U)^\dag = V\circ (x+b)^{-1}$.  The justification amounts
  to the following identities:
  \begin{align}
    \label{eq:T2adj2}
    & (T_2 f)(x) g(x) e^{-x^2} - (T_2 g)(x) f(x) e^{-x^2} = \\ \nonumber
    &\qquad\qquad\partial_x\lp \lp f'(x) g(x) - f(x) g'(x)\rp
 e^{-x^2}\rp\\
    &((x+b)^{-1}U f(x)) g(x) e^{-x^2} - (V ((x+b)^{-1} g(x)) f(x) e^{-x^2} =\\
    \nonumber &\qquad\qquad
    \partial_x\lp \frac{f(x)g(x)e^{-x^2}}{x+b} \rp
  \end{align}
  By the definition
  \eqref{eq:hAadjdef}, $\hA^\dag =  T_2 +b^2 -((x+b)^{-1}U)^\dag$.  Hence,
  \[ \int\lp (\hA f)(x) g(x)  - f(x) (\hA^\dag g)(x)  \rp e^{-x^2}
    dx \in (x+b)^{-1}\hcP(x). \]  The conclusion now follows by Propositions
  \ref{prop:etaP} and \ref{prop:etaUb}.
\end{proof}

\begin{proof}[Proof of Proposition \ref{prop:hHorthog}]
  We claim that the  quasi-polynomials
  $\hh_n(x;b),\; n\in \Nz$ belong to $\cU_b(x)$ and
  satisfy the orthogonality relations
  \begin{equation}
    \label{eq:hHorthog}
    \eta(\hh_m,\hh_n) = \delta_{mn} p_2(-2n)\nu_n,\quad m,n=0,1,2,\ldots.
  \end{equation}
  with $p_2$ as defined in \eqref{eq:p2def} and $\nu_n$ as
  defined in \eqref{eq:nuidef}.   The claim \eqref{eq:hHintorthog}
  then follows by  Corollary \ref{cor:qorthog}.

  As before, let $p_2(T)= (T-b^2)(T+2-b^2)$. It suffices to observe that
  \begin{align*}
    \eta(\hh_n,\hh_m)
    &= \eta(\hA h_n, \hA h_m)\\
    &= \eta(\hA^\dag \hA h_n, h_m)\\
    &= \eta(p_2(T_2) h_n, h_m)\\
    &= p_2(-2n) \eta(h_n,h_m).
  \end{align*}
\tre{  The following alternative proof is also of interest.  It is based on
  the generating function and does not make use of the factorization
  property.  We consider the anti-derivative
  \[ \int \hPsi(x,z;b) \hPsi(x,\bar{z};b) e^{-x^2} dx. \] This
  anti-derivative is single-valued
  % because $\Psi(x,z;b)$
  % satisfies the differential condition that defines $\cU_b$ in
  % \eqref{eq:Ubdef}.  The verification is by direct calculation.
  % Freeze $z,b$ and set
  % \[
  %   \begin{aligned}
  %     \tPsi(x,z;b) &= (x+b) \Psi(x,z;b)\\ &= \lp
  %     (x+b)(z^2-2zx+b^2)-(z+b)\rp e^{xz-z^2/4}.
  %   \end{aligned}
  % \]
  % A direct calculation shows that
  % \[
  %   \begin{aligned}
  %     \tPsi(x,z;b)
  %     &= -(z+b)e^{-bz-z^2/4} + b (z+b)e^{-bz-z^2/4}(x+b) \\
  %     &\quad + O((x+b)^2),
  %   \end{aligned}
  % \]
  % as was to be shown.
  and can be evaluated explicitly as
  \begin{align*}
    &\int \hPsi(x,z;b) \hPsi(x,\bar{z};b) e^{-x^2} dx,\qquad Z:= z\bz,\\
    &\quad =
      e^{-(x-\Re z)^2} e^{Z/2} \lp 2b^2 \Re z-2 Z x-
      \frac{(z+b)(\bz+b)}{x+b}\rp \\
    &\qquad + \frac{\sqrt{\pi}}2 ( Z^2+ (4-2b^2)Z-2b^2+b^4)e^{Z/2} \Erf(x-\Re z)
  \end{align*}
  Hence, integrating along a contour that avoids the singularity,
  \begin{equation}
    \label{eq:intnormgen}
    \int_{-\infty}^\infty \hPsi(x,z;b) \hPsi(x,\bar{z};b) e^{-x^2} dx
    =( Z^2+ (4-2b^2)Z-2b^2+b^4)e^{Z/2} 
  \end{equation}
  Observe that
  \begin{align*}
    ( Z^2+ (4-2b^2)Z-2b^2+b^4)e^{Z/2}
    &= \sum_{n=0}^\infty
      (b^2-2n)(b^2-2-2n) \frac{Z^i}{2^ii!} \\
    &= \sum_{n=0}^\infty p_2(-2n) \nu_n Z^i 
  \end{align*}
  is a generating function for the norms.  Hence, expanding
  \eqref{eq:intnormgen} in powers of $z,\bz$ shows that
  \eqref{eq:hHorthog} holds.}
\end{proof}
\subsection{Proof of the recurrence relations.}
A family of orthogonal polynomials with every degree must satisfy a
3-term recurrence relation.  This is a consequence of the symmetry of
a multiplication operator, in the sense that
\begin{equation}
  \label{eq:xsym}
  \eta( x h_n , h_m) = \eta( h_n, x h_m),\quad n,m \in \Nz.
\end{equation}
Since $h_n(x)$ is orthogonal to every polynomial of lower degree, it
automatically follows that $x h_n(x),\; n\ge 2$ is orthogonal to
$h_0(x),\ldots, h_{n-2}(x)$.  A similar orthogonality argument can be
applied to establish the recurrence relation for the exceptional Krall
polynomials.

% Generically $\hh_n(x),\; n=0,1,2,\ldots$ span $\cU_b(x)$, as defined
% in \eqref{eq:Ubdef}.  Since $(x+b)^2\th_{n+1}(x),\; n\in \Nz$ is
% annihilated by the functional that defines $\cU_b(x)$, it follows that
% $(x+b)^2 \hh_n(x)$ belongs to $\cU_b(x)$.  However, by the symmetry of
% the multiplication operator $(x+b)^2$, it must be orthogonal to all
% $\hh_j(x)$ with $j\le i-3$ and all $\hh_j(x)$ with $j\ge i+3$. It only
% remains to justify the form of the coefficients shown in
% \eqref{eq:rr5}.

% We give two proofs; one is based on the  technique of orthogonality
% that can be employed to establish the classical 3-term recurrence
% relation.  The other proof utilizes generating functions.

\begin{proof}[Proof of Proposition \ref{prop:rr5}.]
Use the classical relation \eqref{eq:hermrel3} to rewrite
\eqref{eq:tHdef} as
\begin{equation}
  \label{eq:x+bhHi}
  \small
  (x+b)\hh_n(x;b)= A^{(1)}_1(n;b)h_{n+1}(x) + A^{(1)}_0(n;b) h_n(x) +
  A^{(1)}_{-1}(n;b) h_{i-1}(x),
\end{equation}
where
\begin{equation}
  \label{eq:Andef}
\begin{aligned}
  A^{(1)}_1(n;b) &= b^2-2n\\
  A^{(1)}_0(n;b) &= b(b^2-2n-1)\\
  A^{(1)}_{-1}(n;b) &= \frac{n}{2}(b^2-2n-2)
\end{aligned}
\end{equation}
Hence
\[ \eta(\hh_n,\hh_{m}) = 0 \] if $|n-m|>2$.  Generically
$\hh_n(x;b),\; n=0,1,2,\ldots$ span $\cU_b(x)$, as defined in
\eqref{eq:Ubdef}.  Observe that $(x+b)^2\th_{n+1}(x),\; n\in \Nz$ is
annihilated by $\gamma_b$.  This follows directly from the definition
of this functional in \eqref{eq:gammabdef}. It follows that
$(x+b)^2 \hh_n(x)$ belongs to $\cU_b(x)$.  Orthogonality immediately
implies that
\begin{equation}
  \label{eq:akjdef}
  (x+b)^2 \hh_n = \sum_{k=-2}^2 a^{(2)}_k(n;b) \hh_{n+k},
\end{equation}
where the coefficients $a^{(2)}_j(n;b)$ are some rational functions.
Taking inner-products of both sides of \eqref{eq:rr5} with
$\hh_{n+k},\; k\in \{-2,-1,0,1,2\}$ and using the fact that
\[ \eta((x+b)\hh_n,(x+b)\hh_{n+k}) = \eta((x+b)^2 \hh_n,
  \hh_{n+k}) \]
gives, respectively,
\begin{align*}
  a^{(2)}_{-2}(n)\,\hnu_{n-2} &=  A_1(n-2) A_{-1}(n) \nu_{n-1}\\
  a^{(2)}_{-1}(n)\,\hnu_{n-1} &=  A_0(n-1) A_{-1}(n) \nu_{n-1}+
                             A_1(n-1)A_0(n)\nu_n\\
  a^{(2)}_0(n)\,\hnu_n  &= A_{-1}(n)^2 \nu_{n-1} + A_0(n)^2 \nu_n + A_1(n)^2
                       \nu_{n+1} \\
  a^{(2)}_1(n)\,\hnu_{n+1} &= A_{-1}(n+1)A_0(n)\nu_n
                       +A_0(n+1) A_{1}(n) \nu_{n+1}\\
   a^{(2)}_2(n)\,\hnu_{n+2} &= A_{-1}(n+2) A_{1}(n) \nu_{n+1}
\end{align*}
with $\nu_n$ as defined in \eqref{eq:nuidef} and with
\begin{equation}
  \label{eq:hnudef}
  \hnu_n(b)= p_2(-2n)  \nu_n=(b^2-2n)(b^2-2n-2)2^{-i}n!.
\end{equation}
Applying the definitions \eqref{eq:Andef} gives
\begin{align*}
  a^{(2)}_{-2}(n)
  &= \frac14 n(n-1)\frac{b^2-2n-2}{b^2-2n+2}\\
  &= \frac14 n(n-1)\lp 1- \frac{4}{b^2-2n+2}\rp\\
  a^{(2)}_{-1}(n)
  &= \frac{bn \lp 
    (b^2-2n+1)(b^2-2n-2) +
    (b^2-2n-1)(b^2-2n+2)\rp}{2(b^2-2n+2)(b^2-2n)} \\
  &= bn \lp 1- \frac{1}{b^2-2n} - \frac{1}{b^2-2n+2}\rp\\
  a^{(2)}_0(n)\,\hnu_n
  &=
    \frac{n (b^2-2n-2)^2 +
    2b^2(b^2-2n-1)^2+(n+1)(b^2-2n)^2}{2(b^2-2n)(b^2-2n-2)} \\
  &= b^2+ n+\frac12 - \frac{2n}{b^2-2n} + \frac{2(n+1)}{b^2-2n-2}\\
  a^{(2)}_1(n) &=  2b \lp 1+ \frac{1}{b^2-2n-2} + \frac{1}{b^2-2n-4}\rp\\
  a^{(2)}_2(n)&= 1 + \frac{4}{b^2-2n+4}.
\end{align*}
The above expressions match the forms for the
$a_k(n;b)$ shown in \eqref{eq:rr5}.
\end{proof}

% Let $\fS_n$ denote the shift operator in the variable $i$, so that
% \[ \fS_n^j % f(n) = f(i+j),\quad j\in \Z \] % The difference
% operators
% % \[ \Delta_k(n,\fS_n;b) = \sum_{j=-k}^k a_j(n;b) \fS^j_n,\quad %
% k=2,3\] generate a commutative algebra which is isomorphic to % the
% polynomial algebra above.  The story is almost the same as in the %
% theory of exceptional Hermite polynomials.  The difference here is %
% that we cannot directly apply a bispectral relation and the transform
% % a certain differential operator in $z$ into a difference operator in
% % $i$.  A certain generalization is required.

\section{Characteristic values}
\label{sect:charvals}
By Lemma \ref{lem:A2adj}, the
exceptional Krall polynomials $\th_{n+1}(x;b),\; n\in \Nz$ belong to
$\ker \gamma_b$, where the latter is the functional defined in
\eqref{eq:gammabdef}. By \eqref{eq:H1def} \eqref{eq:tHdef},
$\th_{n+1}(x) = (b^2-2n)x^{n+1} +$ lower degree terms. Consequently,
for generic values of $b$, that is if $b^2\notin 2\Nz$, we have
$\deg_x \th_{n+1} = n+1,\; n\in \Nz$, and consequently the
$\th_{n+1}(x;b),\; n\in \Nz$ constitute a basis of $\ker \gamma_b$, a
codimension-1 polynomial subspace.  Equivalently, the
quasi-polynomials $\hh_n(x),\; n=0,1,2,\ldots$ constitute a basis
for the quasi-polynomial vector space $\cU_b$ defined in
\eqref{eq:Ubdef}.  This is no longer true when $b^2=2j$ for some
$j\in \Nz$.

Going forward, set
\begin{equation}
  \label{eq:bjdef}
  b_j := \sqrt{2j},\; j\in \Nz.
\end{equation}
We will refer to $\{ b_j: j\in \Nz \}$ as the set of characteristic
$b$-values.  When $b$ attains one of these characteristic values, the
properties of exceptional Krall polynomials detailed above have to be
suitably amended.  The case of $b=0$ is special, in that
\[ \th_n(x;0)= -2 \Wr[h_1,h_n]. \] This means that the resulting $b=0$
family belongs to the class of exceptional Hermite polynomials and we
will not consider it.

The first deviation from the generic theory is the fact that for
characteristic $b$ values the  quasi-polynomials
$\hh_n(x),\; n\in \Nz$ no longer constitute a basis of $\cU_b$, where
the latter is the codimension 1 subspace of $(x+b)^{-1} \cP$ defined
in \eqref{eq:Ubdef}. This is due to a linear dependence of two of the
quasi-polynomials, as described below.
\begin{prop}
  \label{prop:UVdegen}
  For a fixed $j\in \N$ we have
  \begin{align}
    % \label{eq:Udegen}
    % &b_j \hh_j(x;b_j)+ j\hh_{j-1}(x;b_j) = 0\\
  \label{eq:Udegen}
    &b_j \hh_j(x;b_j) + j \hh_{j-1}(x;b_j) = 0\\
    \label{eq:Vdegen}
    &2 \hh_j(x;b_j)+ b_j\hh_{j-1}(x;b_j) = 0
  \end{align}
\end{prop}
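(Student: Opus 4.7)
The plan is a direct calculation from the definition \eqref{eq:H1def}. The key observation is that at $b=b_j$ with $b_j^2=2j$, the leading coefficient $(b^2-2n)$ in \eqref{eq:H1def} vanishes exactly at $n=j$, so that
\[
\hh_j(x;b_j) = -\frac{b_j h_j(x) + j h_{j-1}(x)}{x+b_j}
\]
collapses to a purely rational expression with no polynomial part, whereas
\[
\hh_{j-1}(x;b_j) = 2 h_{j-1}(x) - \frac{b_j h_{j-1}(x) + (j-1) h_{j-2}(x)}{x+b_j}
\]
retains its polynomial term $2 h_{j-1}$.

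First I would substitute these two explicit formulas into the combination $b_j\hh_j+j\hh_{j-1}$ and bring everything over the common denominator $x+b_j$. Using $b_j^2=2j$, the coefficients of $h_{j-1}$ appearing in the numerator cancel, leaving a residual numerator of the form $2j\bigl(x h_{j-1}(x)-h_j(x)\bigr) - j(j-1) h_{j-2}(x)$. By the classical 3-term recurrence \eqref{eq:hermrel3}, $x h_{j-1} - h_j = \tfrac{j-1}{2} h_{j-2}$, so this residual numerator vanishes identically. This establishes \eqref{eq:Udegen}.

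For the second identity \eqref{eq:Vdegen}, one could repeat an entirely analogous calculation, but for $j\ge 1$ it is cleaner to deduce it directly from \eqref{eq:Udegen}: that identity expresses $\hh_{j-1}(x;b_j) = -\tfrac{b_j}{j}\hh_j(x;b_j)$, and substituting into $2\hh_j + b_j\hh_{j-1}$ gives $\bigl(2-b_j^2/j\bigr)\hh_j$, which vanishes because $b_j^2=2j$.

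There is no real obstacle here; the entire argument is driven by the defining relation $b_j^2=2j$ together with the 3-term Hermite recurrence. Conceptually, the identities simply record that the formal denominators $(b^2-2n)$ and $(b^2-2n-2)$ appearing in \eqref{eq:UVdef} produce only removable singularities on the $\hh_n$ family, so that the auxiliary sequences $\hu_n$ and $\hv_n$ — and hence the recurrence \eqref{eq:rr5} — remain well defined as $b$ crosses the characteristic locus.
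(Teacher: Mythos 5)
Your proof is correct, but it takes a genuinely different route from the paper's. You argue by direct computation: specialize the defining formula \eqref{eq:H1def} at $b=b_j$, put $b_j\hh_j+j\hh_{j-1}$ over the common denominator $x+b_j$, and observe that the numerator $2j\lp xh_{j-1}-h_j\rp-j(j-1)h_{j-2}$ vanishes by the 3-term recurrence \eqref{eq:hermrel3}; the identity \eqref{eq:Vdegen} then follows by rescaling, since $b_j^2=2j$ makes the two linear combinations proportional (a reduction the paper itself remarks on just before its proof). The paper instead proceeds structurally: it first establishes the operator identities $\hA U=\hU(T_2+b^2)$ and $\hA V=\hV(T_2+b^2-2)$, notes that $Uh_n=\oU h_n$ and $Vh_n=\oV h_n$ so that $\oU\hh_j=\hA Uh_j$ and $\oV\hh_{j-1}=\hA Vh_{j-1}$, and then reads off both degenerations from the vanishing of $(T_2+b_j^2)h_j$ and $(T_2+b_j^2-2)h_{j-1}$. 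Your calculation is more elementary and self-contained, at the cost of a slightly delicate bookkeeping of coefficients (note that the $j=1$ case relies on the convention $0\cdot h_{-1}=0$, which is harmless but worth stating); the paper's argument buys reusable machinery, since the same operators $\hU,\hV$ immediately yield the closed forms \eqref{eq:hudef2}, \eqref{eq:hvdef2} for $\hu_n,\hv_n$ needed to resolve the indeterminate forms in the recurrence at characteristic values --- precisely the point your closing remark gestures at, but which your computation does not by itself deliver.
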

\noindent
\noindent
Before giving the proof, let us note that since $b_j^2 = 2j$, the
above linear relations are scalar multiples of each other.  The proof
is based on the following differential identities.
\begin{lem}
  Let $\hA,U$ and $V$ be the differential operators defined in
  \eqref{eq:hAdef} \eqref{eq:Uxdef} \eqref{eq:Vxdef} and set
  \begin{align}
    \label{eq:hUxdef}
    \hU(x,\partial_x) &:= (x+b)\circ U\circ (x+b)^{-1}  = U-
                        (x+b)^{-1}\\
    \label{eq:hVxdef}
    \hV(x,\partial_x) &:= (x+b)\circ V\circ (x+b)^{-1}  = V+ (x+b)^{-1}
  \end{align}
  We then have
  \begin{align}
    \label{eq:hAU}
      \hA U &=\hU(T_2+b^2)\\
    \label{eq:hAV}
    \hA V &=\hV(T_2+b^2-2)
  \end{align}
\end{lem}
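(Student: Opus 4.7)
The plan is to verify each of \eqref{eq:hAU} and \eqref{eq:hAV} by a direct operator computation that reduces it to an elementary commutator identity involving $T_2$, $U$, and $V$, then dispatch that identity using Lemma \ref{lem:A2conj}. The key ingredients are the factorization $T_2+b^2 = -VU + 2b(x+b)$ from \eqref{eq:T2UV}, the relation $U+V = 2(x+b)$ from \eqref{eq:UVJ}, and the canonical commutator $[U,V]=2$ from \eqref{eq:UVbrak}.

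For \eqref{eq:hAU}, I first expand both sides using the definitions of $\hA$ and $\hU$, obtaining
\[
\hA U - \hU(T_2+b^2) = [T_2+b^2,\, U] - (x+b)^{-1}\bigl(U^2 - T_2 - b^2\bigr).
\]
Thus \eqref{eq:hAU} is equivalent to $[T_2, U] = (x+b)^{-1}(U^2 - T_2 - b^2)$. Using \eqref{eq:T2UV} to replace $T_2 + b^2$, the right-hand side becomes $(x+b)^{-1}\bigl((U+V)U - 2b(x+b)\bigr)$, which by \eqref{eq:UVJ} collapses to $2(U-b) = 2\partial_x$. A direct computation gives $[T_2, U] = [\partial_x^2 - 2x\partial_x,\, \partial_x] = 2\partial_x$, which matches.

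For \eqref{eq:hAV}, the analogous reduction produces the equivalent identity $[T_2, V] + 2V = (x+b)^{-1}\bigl(UV + T_2 + b^2 - 2\bigr)$. Using $UV = VU + 2$ from \eqref{eq:UVbrak} together with \eqref{eq:T2UV}, the expression inside the parentheses simplifies to $2b(x+b)$, so the right-hand side reduces to $2b$. A direct calculation yields $[T_2, V] = 2\partial_x - 4x$, and combined with $2V = -2\partial_x + 4x + 2b$ this also gives $2b$, completing the verification.

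The main obstacle is not conceptual but organizational: because $(x+b)^{-1}$ does not commute with $U$ or $V$, naively moving the conjugation factor through $U^2$ or $UV$ forces repeated use of the rules \eqref{eq:Uconj} and \eqref{eq:Vconj}. The reduction above circumvents this by grouping all $(x+b)^{-1}$-contributions on a single side, so that the remaining work is only the mild commutator computations of $[T_2, U]$ and $[T_2, V]$, which are immediate from the explicit form of $T_2$.
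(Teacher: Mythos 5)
Your verification is correct, and it is essentially the paper's own argument: both proofs expand $\hA U$ and $\hA V$ from the definition $\hA = T_2+b^2-(x+b)^{-1}U$ and reduce everything to the Lemma \ref{lem:A2conj} identities $T_2+b^2=-VU+2b(x+b)$, $U+V=2(x+b)$, and $[U,V]=2$ (equivalently, to the computations of $[T_2,U]$, $[T_2,V]$, $U^2$, and $UV$). The only difference is bookkeeping — you subtract the two sides and isolate the $(x+b)^{-1}$ terms, whereas the paper substitutes its four auxiliary identities directly into $\hA U$ and $\hA V$ — so no further comment is needed.
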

\begin{proof}
  By \eqref{eq:T2UV} \eqref{eq:UVJ},
  \begin{align*}
    (T^2+b^2)U
    &= U (T^2+b^2+2) -2b\\
    (T^2+b^2) V
    &= V (T^2+b^2-2) +2b\\
    U^2
    &= T^2+b^2+2(x+b)(U-b)\\
    UV
    &= VU+2 = -(T^2+b^2-2)+ 2 b(x+b)\intertext{Hence, by \eqref{eq:hAdef}}
      \hA U
    &= (T_2+b^2)U-(x+b)^{-1} U^2\\
    &= U(T^2+b^2+2)-2b- (x+b)^{-1}(T^2+b^2)- 2(U-b)\\
    &= \hU(T^2+b^2)\\
    \hA V
    &= (T^2+b^2)V-(x+b)^{-1}UV\\
    &= V(T^2+b^2-2)+2b + (x+b)^{-1}(T^2+b^2-2) - 2b\\
    &= \hV (T^2+b^2-2)         
  \end{align*}
  % Hence,
  %   By the definition \eqref{eq:hAdef} and by \eqref{eq:T2UV} \eqref{eq:UVJ},
  %   \begin{align*}
  %   % \hA U &= -VU^2 - (x+b)^{-1} U^2+ 2b(x+b)U\\
  %   %       &=(-UV+2)U+ (x+b)^{-1} (V-2(x+b))U+ 2bU\circ(x+b) -2b\\
  %   %       &=-UVU+ (x+b)^{-1} VU+ 2bU\circ(x+b) -2b\\
  %   %       &=U(-VU+2b (x+b))-(x+b)^{-1} (-VU+2b(x+b))\\
  %   %       &=(U-(x+b)^{-1})(T_2+b^2)\\
  %   \hA V &= -VUV - (x+b)^{-1} UV+ 2b(x+b)V\\
  %         &= -V^2U-2V - (x+b)^{-1} VU-2(x+b)^{-1}+ 2bV\circ (x+b)+2b\\
  %         &= V(-VU+2b (x+b) -2) + (x+b)^{-1}(-VU +2b(x+b) -2)\\
  %         &= (V+(x+b)^{-1})(T^2+b^2-2)
  % \end{align*}
\end{proof}
\begin{proof}[Proof of Proposition \ref{prop:UVdegen}.]
  Let $\oU(n,\fS_n)$ be the difference operator in
  \eqref{eq:UVdef}. Observe that
  \[ \hA U h_n = \hA \oU h_n= \oU \hA h_n = \oU \hh_n,\quad n\in
    \Nz.\] Thus, relation \eqref{eq:Udegen} is equivalent to the
  assertion that $\oU \hh_{j}(x;b_j) = 0$.    This follows from
  \eqref{eq:hAU} and the fact that
  \[ (T_2 + b_j^2) h_{j} = (-2j+2j) h_{j} = 0.\]
  Similarly, let $\oV$ be the difference operator defined in
  \eqref{eq:UVdef}. Observe that
  \[ \hA V h_n = \hA \oV h_n= \oV \hA h_n = \oV \hh_n,\quad n\in
    \Nz.\] Thus, relation \eqref{eq:Vdegen} is equivalent to the
  assertion that $\oV \hh_{j-1}(x;b_j) = 0$.    This follows from
  \eqref{eq:hAV} and the fact that
  \[ (T_2 + b_j^2-2) h_{j-1} = (-2j+2+2j-2) h_{j-1} = 0.\]
\end{proof}

The second deviation from the generic theory is the fact that the
recurrence relation \eqref{eq:rr5} is problematic because
$\hv_{j-1}(x;b_j), \hu_j(x;b_j)$, as given in \eqref{eq:hvdef}
\eqref{eq:hudef}, are ill-defined indefinite forms.  The obvious way
to circumvent this difficulty is to adopt the limit definitions
\begin{align}
  \label{eq:hvlim}
  \hv_{j-1}(x,b_j)
  &= \lim_{b\to b_j} \frac{2\hh_{j}(x,b)+b\hh_{j-1}(x,b)}{b^2-2j}\\ 
  \label{eq:hulim}
  \hu_j(x,b_j)
  &= \lim_{b\to b_j} \frac{b\hh_j(x,b)+j   \hh_{j-1}(x,b)}{b^2-2j}. 
\end{align}
These limits can be explicitly evaluated thanks to the following
identities.  The expressions below constitute an alternate formulation
of $\hv_n,\hu_n$ in terms of classical Hermite polynomials that are
valid for all values of $b$.
\begin{prop} Let $\hU,\hV$ be the differential operators defined in
  \eqref{eq:hUxdef} \eqref{eq:hVxdef}. We have
  \begin{align}
    \label{eq:hudef2}  \hu_n &:= \hU h_n,\\
    \label{eq:hvdef2} \hv_n  &:= \hV h_n.
\end{align}
\end{prop}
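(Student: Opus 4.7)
The plan is to exploit the intertwining identities \eqref{eq:hAU} and \eqref{eq:hAV} from the preceding lemma, combined with the classical lowering and raising relations \eqref{eq:hermlrel}, \eqref{eq:hermrrel} and the eigenvalue equation \eqref{eq:hermde}. The key observation is that $\hA$ is injective on polynomials (generically), so to show $\hU h_n = \hu_n$ it is enough to apply $\hA$ to both sides and verify the equality, which the intertwining identities make straightforward.

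First I would compute $U h_n$ directly: since $U = \partial_x + b$, the lowering relation \eqref{eq:hermlrel} gives $U h_n = n h_{n-1} + b h_n$. Applying $\hA$ to both sides and using $\hh_n = \hA h_n$ from \eqref{eq:hhA2H} yields
\begin{equation*}
\hA U h_n = b \hh_n + n \hh_{n-1}.
\end{equation*}
On the other hand, by \eqref{eq:hAU} and \eqref{eq:hermde},
\begin{equation*}
\hA U h_n = \hU (T_2 + b^2) h_n = (b^2 - 2n)\, \hU h_n.
\end{equation*}
Comparing the two expressions gives $(b^2 - 2n)\,\hU h_n = b\hh_n + n\hh_{n-1}$, which is precisely the formula \eqref{eq:hudef} defining $\hu_n$, up to the factor $(b^2-2n)^{-1}$ absorbed into $\hoU$.

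The second identity is analogous. The definition $V = -\partial_x + 2x + b$ combined with the raising relation \eqref{eq:hermrrel} (or, equivalently, the 3-term relation) yields $V h_n = 2 h_{n+1} + b h_n$, hence $\hA V h_n = 2\hh_{n+1} + b \hh_n$. On the other hand, \eqref{eq:hAV} together with \eqref{eq:hermde} gives $\hA V h_n = \hV(T_2 + b^2 - 2) h_n = (b^2 - 2n - 2)\, \hV h_n$, which matches \eqref{eq:hvdef}.

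There is essentially no obstacle; the algebraic content is already encoded in the factorizations \eqref{eq:hAU} and \eqref{eq:hAV}. The only thing worth flagging is that the identities $\hU h_n = \hu_n$ and $\hV h_n = \hv_n$ hold as identities of rational functions in $b$, so at the characteristic values $b = b_j$ where the denominators in \eqref{eq:hudef}, \eqref{eq:hvdef} vanish, the left-hand sides $\hU h_n$, $\hV h_n$ are manifestly well defined, and the proposition thus provides precisely the limit evaluations promised by \eqref{eq:hulim} and \eqref{eq:hvlim}, consistent with the linear dependencies forced by Proposition \ref{prop:UVdegen}.
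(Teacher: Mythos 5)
Your proposal is correct and is essentially the paper's own proof: both compute $Uh_n = bh_n + nh_{n-1}$ and $Vh_n = 2h_{n+1}+bh_n$ from the classical relations, then evaluate $\hA U h_n$ and $\hA V h_n$ a second way via the intertwining identities \eqref{eq:hAU}, \eqref{eq:hAV} together with \eqref{eq:hermde}, and compare with the definitions \eqref{eq:hudef}, \eqref{eq:hvdef}. (The appeal to injectivity of $\hA$ in your opening plan is not actually needed, since the comparison of the two expressions for $\hA U h_n$ already yields $(b^2-2n)\hU h_n = b\hh_n + n\hh_{n-1}$ directly.)
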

\begin{proof} Let $\oU,\oV$ be the difference operators defined in
  \eqref{eq:UVdef}. Observe that
  \begin{align*}
    U h_n(x) &= n h_{n-1}(x) + b h_n(x) = \oU h_n(x)\\
    V h_n(x) &= -n h_{n-1}(x) + 2x h_n(x) + b h_n(x) =2h_{n+1}(x)+b
               h_n(x)=\oV h_n(x) 
  \end{align*}
  Hence, by \eqref{eq:hAU} \eqref{eq:hAV},
  \begin{align*}
    \hA U h_n(x) &=  \oU \hh_n(x)  = (b^2-2n)\hU h_n(x)\\
    \hA V h_n(x) &= \oV \hh_n(x) = (b^2-2n-2)\hV h_n(x)
  \end{align*}
  Relations \eqref{eq:hudef2} \eqref{eq:hvdef2} follow immediately
  from the definitions of $\hoU, \hoV$ in \eqref{eq:UVdef}.
\end{proof}
\noindent
With these definitions of $\hu_n,\hv_n$, relation \eqref{eq:rr5}
remains valid when $b=b_j,\; j\in \N$.  However, when
$n\in \{j-2,j-1,j,j+1,j+2\}$, the LHS of \eqref{eq:rr5} can no longer
be considered to be a linear combination of eigenfunctions of $\hT_4$.
However, this is to be expected because of the following observation.

The quasi-polynomial vector space $\cU_b$ is invariant with respect to
the action of the eigenoperator $\hT_4$.  As already noted,
generically, the action of $\hT_4$ is diagonalizable.  This is no
longer the case when $b$ attains a characteristic value.  If $b=b_j$,
then the zero eigenvalue is degenerate because
\[ \lambda_{b_j}(j) = \lambda_{b_j}(j-1) = 0, \] where
\begin{equation}
\label{eq:lambndef}
\lambda_b(n) = p_2(-2n) = (b^2-2n) (b^2-2n+2).
\end{equation}
However, there is only one eigenfunction at that eigenvalue because of
\eqref{eq:Udegen}.  Rather the spectral degeneracy corresponds to the
existence of a Jordan 2-block in the $\hT_4$ action, or what is
equivalent, a generalized eigenvector of $\hT_4$.
\begin{prop}
  Fix a $j\in \N$ and let $b=b_j$. Then
  \begin{equation}
    \label{eq:T4genevec}
    \begin{aligned}
      \hT_4 \hu_j&= 4j\hh_{j-1}\\
      \hT_4^2 \hu_j&= 0
    \end{aligned}
  \end{equation}
\end{prop}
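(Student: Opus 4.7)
The approach I would take is to compute $\hT_4 \hu_j = \hA \hA^\dag \hU h_j$ by first obtaining a closed form for $\hA^\dag \hU$ that allows the operator to be ``pushed through'' to act on $h_j$ as an eigenfunction of $T_2$. Once applied to $h_j$ at $b=b_j$, the result will be a linear combination of $h_{j-1}$ and $h_j$; applying $\hA$ will produce a linear combination of $\hh_{j-1}$ and $\hh_j$; and the degeneracy relation \eqref{eq:Udegen} will collapse it to the desired multiple of $\hh_{j-1}$. The second identity will then be immediate from Proposition \ref{prop:hTeigen} because the relevant eigenvalue vanishes.

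The key algebraic step is the identity $\hA^\dag \hU = (T_2+b^2)U - 2b$. I would derive it by expanding $\hA^\dag \hU = (T_2+b^2)\hU - V(x+b)^{-1}\hU$ via \eqref{eq:hAadjdef}, noting that $(x+b)^{-1}\hU = U(x+b)^{-1}$ (since $\hU = (x+b)U(x+b)^{-1}$ by \eqref{eq:hUxdef}), and then substituting $VU = -(T_2+b^2) + 2b(x+b)$ from \eqref{eq:T2UV} together with $\hU + (x+b)^{-1} = U$. A single use of the commutator $[T_2, U] = 2\partial_x = 2(U-b)$ recasts this as $\hA^\dag \hU = U(T_2 + b^2 + 2) - 4b$.

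Applying this to $h_j$ and specializing to $b = b_j$, so that $(T_2 + b_j^2 + 2)h_j = 2h_j$ and $Uh_j = jh_{j-1} + b_j h_j$, yields $\hA^\dag \hU h_j = 2jh_{j-1} - 2b_j h_j$. Hitting with $\hA$ and using $\hA h_n = \hh_n$ gives $\hT_4 \hu_j = 2j\hh_{j-1} - 2b_j \hh_j$. The degeneracy \eqref{eq:Udegen} says $b_j \hh_j = -j\hh_{j-1}$, which collapses the right-hand side to $4j\hh_{j-1}$. For the second identity, Proposition \ref{prop:hTeigen} gives $\hT_4 \hh_{j-1} = p_2(-2(j-1))\hh_{j-1} = (b^2 - 2j + 2)(b^2 - 2j)\hh_{j-1}$, which vanishes at $b = b_j$; hence $\hT_4^2 \hu_j = 4j\,\hT_4 \hh_{j-1} = 0$.

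The main obstacle is the derivation of the closed form for $\hA^\dag \hU$: it requires threading together several of the identities of Lemma \ref{lem:A2conj} in the right order, and recognizing the cancellation $\hU + (x+b)^{-1} = U$ at the critical step. Once this identity is in hand, everything else is specialization to $b = b_j$ plus a single invocation of \eqref{eq:Udegen} and the eigenvalue formula.
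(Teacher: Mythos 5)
Your argument is correct, and every step checks out: the identity $\hA^\dag \hU = (T_2+b^2)U - 2b = U(T_2+b^2+2)-4b$ does follow from \eqref{eq:hAadjdef}, \eqref{eq:hUxdef} and \eqref{eq:T2UV}, and applying it to $h_j$ at $b=b_j$ gives $2j h_{j-1}-2b_j h_j$, whence $\hT_4\hu_j = 2j\hh_{j-1}-2b_j\hh_j = 4j\hh_{j-1}$ by \eqref{eq:Udegen}, exactly as you say. However, your route differs from the paper's. The paper never touches the differential-operator factorization at this point: it works at general $b$ with the difference-operator expression $\hu_n = (b\hh_n+n\hh_{n-1})/(b^2-2n)$ from \eqref{eq:hudef}, applies the eigenvalue relation \eqref{eq:hTeigen} to each term, observes that the factor $b^2-2n$ cancels to give $\hT_4\hu_n = b(b^2-2n-2)\hh_n + n(b^2-2n+2)\hh_{n-1}$ for all $b$, and then specializes $n\to j$, $b\to b_j$ and invokes \eqref{eq:Udegen}. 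That is a two-line computation, but it implicitly leans on the limit definition \eqref{eq:hulim} (or continuity in $b$) to justify evaluating the formula at the indeterminate point. Your version buys something in exchange for the extra operator algebra: since $\hu_n=\hU h_n$ via \eqref{eq:hudef2} is valid for all $b$ with no singular denominator, you never need the limiting argument, and your intermediate identity $\hA^\dag\hU = U(T_2+b^2+2)-4b$ is a clean structural fact not recorded in the paper. The final steps --- collapsing via \eqref{eq:Udegen} and killing $\hT_4\hh_{j-1}$ because $p_2(-2(j-1))=(b_j^2-2j+2)(b_j^2-2j)=0$ --- coincide with the paper's.
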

\begin{proof}
  By \eqref{eq:hTeigen} and \eqref{eq:hudef}, and for general values
  of $b$, we have
  \begin{align*}
    \hT_4 \hu_n
    &= b(b^2-2n-2) \hh_n+ n(b^2-2n+2) \hh_{n-1}
  \end{align*}
  The desired conclusion now follows by setting $n\to j, b\to b_j$ and
  using \eqref{eq:Udegen}.
\end{proof}

\section{Bispectrality}
\label{sect:bispectrality}
The construction of the exceptional Krall polynomials should not be
seen as an isolated example, but rather should be interpreted as an
instance of a bispectral Darboux transformation.  We devote the rest
of this section to an explanation of this idea.

\subsection{Review of basic concepts.}
% \begin{enumerate}
% \item Fix a merormorphic function $\psi(x,z)$ and consider the
%   equation
%   \[ T(x,\partial_x) \psi(x,z) = T^\flat(z,\partial_z) \psi(x,z).\] We
%   define $\cA_\psi$ to be the algebra of operators $T$ for which such
%   a $T^\flat$ exists.  The map $T\to T^\flat$ is an anti-isomorphism
%   from $\cA_\psi$ to $\cA_\psi^\flat:= \cA_\tpsi$ where
%   $\tpsi(x,z) = \psi(z,x)$.  Following [Casper,Yakimov] we refer to
%   $\cA_\psi,\cA_\psi^\flat$ as the left- and right- Fourier algebras
%   associated with the joint eigenfunction $\psi(x,z)$.  
% \item We wish to define commutative subalgebras $\cR, \cR^\flat,\cR^\natural$
%   corresponding to bispectral operators.  Only the $D\Delta$
%   bispectrality requires a slightly different notion of $\cR^\flat$,
%   namely relations of the form
%   \[ \hT(x,\partial_x) p(x) \hPsi = Q(z,\partial_z) \hPsi(x,z)\] where
%   $\hT^\flat$ is a polynomial in $z\partial_z$; i.e. $\hT$ is an
%   eigenoperator for the coefficients $\phi(x)$ of $\hPsi(x,z)$. This
%   allows us to multiply $Q^\natural$ by the reciprocal of the
%   eigenvalues to obtain a recurrence relation.
% \item 
% \end{enumerate}

Given operators $L(x,\partial_x)$ and $Q(z,\partial_z)$ and a
bivariate function $\psi(x,z)$ we say that $(L,Q,\psi)$ is a
bispectral triple if there exist non-constant eigenvalue functions
$f(z), g(x)$ such that
\begin{align}
  \label{eq:Lfpsi}
  L(x,\partial_x) \psi(x,z) &= f(z) \psi(x,z) \\
  \label{eq:Qgpsi}
  Q(z,\partial_z) \psi(x,z) &= g(x) \psi(x,z).
\end{align}
This notion was introduced by Duistermaat and Gr\"unbaum in \cite{DG87}
who classified all bispectral Schr\"odinger operators. In \cite{W93},
Wilson considered the more general question of bispectral algebras of
differential operators and classified all maximal bispectral algebras
of rank one by introducing the idea of a bispectral involution.

Wilson's involution leads naturally to the notion of an
anti-homomorphism $\flat$ from the algebra of differential operators
in $x$ to the algebra of differential operators in $z$ with the
property that
\[ L(x,\partial_x) \psi(x,z) = L^\flat(z,\partial_z) \psi(x,z).\] This
idea occurs in \cite{BHY96,KM20,KR97} where it is referred to as a
bispectral anti-homomorphism of stabilizer algebras, and evolved into
the notion of a left- and right- Fourier algebra that was carefully
studied in \cite{CY20}.

The bispectrality notion also applies to heterogeneous triples where
the dual variable is discrete.  Given a differential operator
$T(x,\partial_x)$, a difference operator $\oX(n,\fS_n)$ and a sequence
of functions $\phi_n(x)$ we say that $(T,\oX,\phi)$ is a a CD
(continuous-discrete\footnote{This terminology was introduced in
  \cite{GH97}.}) bispectral triple if there exists an eigenvalue
sequence $\lambda_n$ and an eigenvalue function $q(x)$ such that
\begin{equation}
  \label{eq:LDelpsi}
\begin{aligned}
  T\phi_n &= \lambda_n \phi_n\\
  \oX \phi_n &= q \phi_n.
\end{aligned}
\end{equation}
There is a well-defined way of obtaining a CD triple
\eqref{eq:LDelpsi} from certain differential relations by treating the
joint eigenfunction $\psi(x,z)$ as a generating function whose
coefficients are the joint CD eigenfunctions $\phi_n(x)$.  This
approach is fully detailed in \cite{KM20,KM25}, but we summarize the
relevant details here.

Begin by writing
\begin{equation}
  \label{eq:psixphin}
   \psi(x,z) = \sum_{n=0}^\infty \phi_n(x) \frac{z^n}{n!}. 
\end{equation}
% We then have
% \[ z \partial_z \psi(x,z) = \sum_{n=0}^\infty n \phi_n(x)
%   \frac{z^n}{n!}. \]
We define a morphism $\natural$ from the algebra of differential
operators with polynomial coefficients to the algebra of difference
operators with polynomial coefficients by considering the action of
$z,\partial_z$ on generating functions.  In other words, for an
operator $X(z,\partial_z)$ we wish $X^\natural(n,\fS_n)$ to be a
difference operator such that
\[ X \psi(x,z) = \sum_{n=0}^\infty X^\natural \phi_n(x) \frac{z^n}{n!}.\]
Observe that
\begin{align*}
  z \psi(x,z)
  &= \sum_{n=0}^\infty \phi_n(x) \frac{z^{n+1}}{n!}
    = \sum_{n=0}^\infty n \phi_{n-1}(x) \frac{z^n}{n!}\\
  \partial_z \psi(x,z)
  &= \sum_{n=1}^\infty \phi_n(x) \frac{z^{n-1}}{(n-1)!}
    = \sum_{n=0} \phi_{n+1}(x) \frac{z^n}{n!}
\end{align*}
The homomorphism in question is therefore generated by
\[ \natural: z \mapsto n \fS_n^{-1},\quad \partial_z \mapsto \fS_n.\]

Classical Hermite polynomials serve as an illustration of these
principles. Let
\begin{equation}
  \label{eq:tJdef}
  J(z,\partial_z) := \partial_z+z/2
\end{equation}
\noindent
On the one hand, the generating function $\Psi(x,z)$
\eqref{eq:Psidef} is the joint eigenfunction of the bispectral triple
$(\partial_x, J, \Psi)$ with
\begin{align}
  \label{eq:DxPsi0z}
  \partial_x \Psi &= z \Psi\\
  \label{eq:DzPsi0x}
  J \Psi &= x \Psi.
\end{align}
Consequently, the bispectral anti-homomorphism associated to
$\Psi(x,z)$ is generated by
\[ \flat: x \mapsto J(z,\partial_z),\quad \partial_x \mapsto z.\]
On the other hand, the polynomial sequence $h_n(x)$, is the joint
eigenfunction of the following CD eigenrelations:
\begin{equation}
  \label{eq:hermDDel}
\begin{aligned}
  T_2 h_n &= -2n h_n\\
  \oJ h_n &= x h_n,
\end{aligned}
\end{equation}
where $T_2(x,\partial_x)$ is the Hermite eigenoperator
\eqref{eq:T2def} and $\oJ=J^\natural$ is the Hermite Jacobi operator
\eqref{eq:Jdef}.  Thus, the CD Hermite triple consists of the
eigenvalue relation \eqref{eq:hermde}, and of the 3-term recurrence
relation \eqref{eq:hermrel3}.  Observe that
\begin{gather*}
  T_2^\flat = z^2-2 z\circ (\partial_z+z/2) =
  -2z \partial_z
  % T_2^{\flat\natural} = -2n\\
  % x^\flat = \tX,\\
  % x^{\flat\natural} =\oJ.
\end{gather*}
Thus, the Hermite CD triple \eqref{eq:hermDDel} corresponds to
the following differential relations
\begin{equation}
  \begin{aligned}
    T_2(x,\partial_x) \Psi(x,z) &= -2z \partial_z
    \Psi(x,z)\\
    J(z,\partial_z) \Psi(x,z) &= x \Psi(x,z)
  \end{aligned}
\end{equation}

\subsection{Bispectral Darboux transformations.}
% We now indicate how the exceptional Krall Hermite polynomials may be
% constructed with the the framework of a bispectral Darboux
% transformation.  This notion was introduce in [Kasman,Rothstein] and
% [Horozov,Yakimov] who used it to construct examples of rank 2
% bispectral triples.
We now indicate how the exceptional Krall Hermite polynomials may be
understood within the framework of higher rank bispectral Darboux
transformations. As was mentioned in the introduction, this
construction was first considered in \cite{GHH97}.  However, this
earlier work did not focus on orthogonality and only considered a
single bispectal triple consisting of a 4th order differential
operator and a 5th order recurrence relation.  However, the general
philosophy of bispectral Darboux transformation requires one to study
bispectral commutative algebras.  For the case of the
quasi-polynomials $\hh_n(x),\; n\in \Nz$ considered here, this
consideration requires us to also exhibit a 6th order eigenoperator
and a 7th order recurrence relation.

Let $\hA(x,\partial_x)$ be the
intertwining operator \eqref{eq:hAdef} that is used to dress the
classical Hermite eigenoperator $T_2(x,\partial_x)$ \eqref{eq:T2def}
to obtain the Krall eigenoperator $\hT_4(x,\partial_x)$
\eqref{eq:hT4}.  As a consequence of this relation and of
\eqref{eq:pT02}, we have the intertwining relation
\[ \hT_4 \hA = \hA\, p_2(T_2); \] which serves as the Darboux
transformation in question.  We adapt the techniques in \cite{KR97} to
explain why this construction preserves the bispectrality of the
classical Hermite eigenrelation.  In other words, we show how to
``dress'' the classical recurrence relations to obtain the higher
order recurrence relations enjoyed by the exceptional Krall
polynomials.

First, we note that the construction of the operator $\hT_4$ is an
example of a ``rank 2'' Darboux transformation in that the operators
in the centralizer of $T_2$ all have even degree and the construction
utilizes that operator's $2$-dimensional eigenspace
\cite{BHY96a,K97,LP94}. One can see that $T_2$ does not commute with
any operators of odd order since its image in the Weyl algebra under
the anti-isomorphism $x\mapsto\partial_x$ and $\partial_x\mapsto x$ is
first order and hence only commutes with polynomials in itself.

Now let
\begin{align*}
  f_1(x;a) &:= \cos\lp \frac{\pi a}2\rp \Gamma\lp \frac{a}2+\frac12\rp
             M\!\lp\! -\frac{a}2,\frac12,x^2\rp
             +2 \sin\lp \frac{\pi a}2\rp\Gamma\lp \frac{a}2+1\rp
             x M\!\lp\!\frac12 -\frac{a}2,\frac32,x^2\rp\\
  f_2(x;a) &:= \sin\lp \frac{\pi a}2\rp \Gamma\lp \frac{a}2+\frac12\rp
             M\!\lp\! -\frac{a}2,\frac12,x^2\rp
             -2 \cos\lp \frac{\pi a}2\rp\Gamma\lp \frac{a}2+1\rp
             x M\!\lp\!\frac12 -\frac{a}2,\frac32,x^2\rp
\end{align*}
where
\[ M(a,c,z)=\,_1F_1(a,c,z) = \sum_{n=0}^\infty \frac{ (a)_n}{(c)_n}
  \frac{z^n}{n!},\] is the usual confluent hypergeometric function.
The function $y(z) = M(a,c,z)$ is a solution of Kummer differential equation
\[ z y''(z)+ (c-z) y' + a y = 0.\] Consequently, $f_1(x;a), f_2(x;a)$
constitute a basis for $\ker(T_2 +2a)$; i.e.
\begin{equation}
  \label{eq:T2af}
   T_2(x,\partial_x) f_k(x;a) = -2a f_k(x;a),\quad k=1,2.
\end{equation}
Indeed, for $a\in \Nz$, the first solution reduces to a multiple of
Hermite polynomials:
\[ f_1(x;n) = \sqrt\pi h_n(x),\quad n\in \Nz.\]

Both functions satisfy the 3-term recurrence relation
\eqref{eq:hermrel3} enjoyed by Hermite polynomials:
\begin{equation}
  \label{eq:fRR}
 x f_k(x;a)= f_k(x;a+1)+ \frac{a}{2} f_k(x;a-1),\quad k=1,2.  
\end{equation}
This is a consequence of the following elementary identities:
\begin{align}
  &(a-c+1) M(a,c,z) - a M(a+1,c,z) + (c-1) M(a,c-1,z) = 0,\\
  &(c-a) M(a-1,c,z) + (2a-c+z) M(a,c,z)-a M(a+1,c,z)=0.
\end{align}

Further let
\begin{equation}
  \label{eq:cbdef}
  \begin{aligned}
    c_b &= \ev_{b^2/2}(n) \circ \oU(n,\fS_n)\\
    c_b(f(a))&=2f(b^2/2)+bf(b^2/2-1)    
  \end{aligned}
\end{equation}
be the linear functional that evaluates its argument at the indicated
points\tre{\footnote{Hey, that's weird.  The distribution is not
    supported at one point, but at two!  Maybe I'm not thinking about
    it right, but that seems to be one rather unusual thing about this
    construction in the context of bispectral Darboux transformations.
    Perhaps that should also be commented upon in the closing
    remarks?}} and takes this linear combination.  As a consequence of
\eqref{eq:T2af} and \eqref{eq:fRR}, we have
\[
  \hA(x,\partial_x) f_k(x;a) = (b^2-2a)f_k(x;a) -\frac{b f_k(x;a)+ a
    f_k(x;a-1)}{x+b},\quad k=1,2,
\]
where $\hA(x,\partial_x)$ is the
intertwiner defined in \eqref{eq:hAdef}.  
It follows immediately that
\[ \hA(x,\partial_x) c_b(f_k(x,a)) = 0,\quad k=1,2 .\] Thus, $\hA$ is
indeed the unique monic ordinary differential operator whose kernel is
spanned by $c_b(f_1(x,a))$ and $c_b(f_2(x,a))$.  By \eqref{eq:T2af},
\[ p_2(T_2) f_k =  (b^2-2a)(b^2-2-2a) f_k,\quad k=1,2.\]
Hence,
\[ p_2(T_2) c_b(f_k) = c_b(p_2(T_2) f_k) =  0 ,\quad k=1,2.\]
%It then follows from the fact that
%$c_b(p_2(z)p(z))=0$ for any polynomial $p(z)$ that
%$$
%\hA\circ p_2(T_2)(c_b(f_n))=\hA(c_b(p_2(z)f_n)=0.
%$$
It follows that $\ker(\hA)\subset\ker(\hA p_2(T_2))$, and  hence there
exists a unique differential operator $\hT_4$ satisfying the
intertwining relationship $ \hT_4 \hA = \hA\, p_2(T_2)$.  This
establishes the construction as a rank 2 Darboux transformation.

The next remark is that a bispectral Darboux transformation involves
not just two operators, but rather two commutative algebras of
bispectral operators.  To that end, define $R_{\hA,T_2}$ to be the
algebra of polynomials $p(T)$ with the property that there exists a
$\hT_p(x,\partial_x)$ such that
  % \footnote{The factor of $-1/2$ is there for
  %   convenience.  It allows us
  %   to write $p(n)$ in a subsequent relation.}
\begin{equation}
  \label{eq:hApT2}
  \hA p(T_2) = \hT_p \hA.
\end{equation}
We define
\begin{equation}
  \label{eq:cRdef}
  \cR = \{ \hT_p : p\in R_{\hA,T_2} \}.
\end{equation}
to be the algebra of all such operators. By construction, $\cR$ is
isomorphic to $R_{\hA,T_2}$ and is therefore a commutative algebra.

In this case, the polynomial algebra $R_{\hA,T_2}$ consists of
polynomials that vanish at $-b^2$ and $2-b^2$, while the operator
algebra $\cR$ consists of differential operators that act as
``eigenoperators'' of the quasi-polynomials $\hh_n(x)$.  Since
$T_2 h_n = -2n h_n$, relation \eqref{eq:hApT2} is equivalent to
\[ \hT_p \hh_n = p(-2n) \hh_n,\quad p\in R. \] The fourth-order
eigenoperator $\hT_4$ defined in \eqref{eq:hT4} is precisely
$\hT_{p_2}$. The general element of $R_{\hA,T_2}$ is a
polynomial of the form
$p(T) = \tp(T) p_2(T)$ where $\tp(T)$ is an arbitrary polynomial.
Thus, 
the general element of $\cR$ is an operator of the form
\[ \hT_p = \hA_2 \tp(T_2) \hA_2^\dag. \]
It follows that, as an algebra, $\cR$ is generated by $\hT_4$ and by
the 6th order eigenoperator
\[ \hT_6 =  \hA_2 T_2 \hA_2^\dag. \]
The corresponding eigenrelation is
\[ \hT_6 \hh_n = -2n(b^2-2n)(b^2-2n-2) \hh_n,\quad n=0,1,2,\ldots.\]

% Set
% \begin{equation}
%   \label{eq:tA2B2def}
%   \begin{aligned}
%     \tA_2(z,\partial_z) &:= ((x+b) \hA)^\flat 
%     = \lp (x+b)(T_2 + b^2) -(\partial_x+b)\rp ^\flat\\
%     &=(b^2-z\partial_z)\circ (\partial_z+z/2+b)-(z+b)\\
%     \tA_2^\natural(n,\fS_n)
%     &= (b^2-2n)(\oJ+b) - \oU\\
%     \tB_2  &:=\hA ^\dag \circ (x+b)= (T_2 + b^2-2)\circ (x+b) +
%     \partial_x + b. 
%   \end{aligned}
% \end{equation}
% Consequently,
% \begin{align*}
%   \hPsi(x,z) &= (x+b)^{-1} \tA_2(z,\partial_z) \Psi(x,z),\\
%   \hh_n(x) &= (x+b)^{-1} \tA_2^\natural(n,\fS_n) h_n(x).
% \end{align*}

The intertwiner $\hA$ has rational coefficients and therefore does not
admit a dual $\hA^\flat$.  Thus, as per relation (1.8) of \cite{KR97}
\tre{\footnote{Warning: our use of the tilde decoration differs from that
  paper.}} the construction of the bispectral duals requires the use of
the normalized intertwiner and its images under the bispectral
anti-homomorphism:
\begin{align}
  \label{eq:tAdef}
  \tA &= (x+b)\hA\\
  \label{eq:tA2flat}
  \tA^\flat(z,\partial_z)
      & = \lp (x+b)(T_2 + b^2) -(\partial_x+b)\rp ^\flat\\ \noindent
      &=(b^2-2z\partial_z)\circ (\partial_z+z/2+b)-(z+b)\\
  \label{eq:tAnatural}
  \tA^{\flat\natural}(n,\fS_n)
      &= (b^2-2n)(\oJ+b) - \oU
\end{align}
We now define $Q=R_{\tA^{\flat\natural}, \oJ}$ to be the algebra of
polynomials $q(x)$ for which there exists
% an operator
% $\hX_q(z,\partial_z)$ such that
% \[  \tA^\flat q(x^\flat) = \hX_q \tA^\flat.\]
% Equivalently, and perhaps somewhat more usefully, $Q$  consists of
% polynomials $q(x)$  for which there exists
a difference operator
$\hoX_q(n,\fS_n)$ such that
\begin{equation}
  \label{eq:hoXqtA}
   \hoX_q \circ \tA^{\flat\natural} = \tA^{\flat\natural} \circ q(\oJ).
\end{equation}
Thus, we regard $\hoX_q$ as the dressing of the classical operator
$q(\oJ)= q(x^\flat)^{\natural}$ by the discrete intertwiner
$\tA^{\flat\natural}$.

Since
\[ \hh_n = (x+b)^{-1} \tA^{\flat\natural} h_n, \]
the intertwining relation \eqref{eq:hoXqtA} is equivalent to
\begin{equation}
  \label{eq:qhhX}
  q \hh_n = \hoX_q \hh_n,
\end{equation}
which represents a higher-order recurrence relation with multiplier $q(x)$.

\begin{prop}
  We claim that $q\in Q$ if and only if the operator
  \begin{equation}
    \label{eq:Xqdef}
     X_q:=\hA^\dag q \hA.
  \end{equation}
has polynomial coefficients.
\end{prop}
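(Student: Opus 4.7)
The plan is to reformulate $q\in Q$ as the equivalent ``dressed recurrence''
\[ q\,\hh_n = \hoX_q\,\hh_n \]
for some difference operator $\hoX_q$ in $n$ (this equivalence is already recorded as \eqref{eq:qhhX}), and then to pass between that statement and the polynomiality of $X_q=\hA^\dag q\hA$ by exploiting the twin factorizations $\hA^\dag\hA=p_2(T_2)$ and $\hA\hA^\dag=\hT_4$ together with $\hA h_n=\hh_n$.

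For the forward direction I assume $q\,\hh_n=\hoX_q\hh_n$ and apply $\hA^\dag$ on the left.  Since $\hA^\dag$ acts only in $x$ and $\hoX_q$ acts only in $n$, the latter passes termwise through $\hA^\dag$, so
\[ X_q h_n = \hA^\dag(q\hh_n) = \hoX_q(\hA^\dag \hh_n) = \hoX_q\bigl(p_2(-2n)\,h_n\bigr), \]
using $\hA^\dag\hh_m=\hA^\dag\hA h_m=p_2(-2m)h_m$.  The right-hand side is a finite linear combination of Hermite polynomials, so $X_q$ sends each $h_n$ into $\cP$, hence maps $\cP$ into $\cP$.  Any differential operator with rational coefficients that preserves $\cP$ must have polynomial coefficients (recover each coefficient inductively by evaluating on $1,x,x^2,\ldots$), so $X_q$ has polynomial coefficients.

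For the converse, if $X_q$ has polynomial coefficients then each $X_q h_n\in\cP$ admits a finite Hermite expansion $X_q h_n=\sum_k y_k(n)\,h_{n+k}$, defining a difference operator $\mathsf{Y}:=\sum_k y_k(n)\fS_n^k$ with $\mathsf{Y} h_n=X_q h_n$.  Applying $\hA$ on the left and using $\hA\hA^\dag=\hT_4$ (with the same termwise commutation),
\[ \hT_4(q\hh_n) = \hA X_q h_n = \mathsf{Y}\hh_n. \]
To extract $\hoX_q$ I first show $q\hh_n\in\cU_b$: by hypothesis $\hA^\dag(q\hh_n)=X_q h_n\in\cP$, and the singular-part expansion of $\hA^\dag\phi$ at $x=-b$ for $\phi\in(x+b)^{-1}\cP$ (the same residue analysis underlying Proposition \ref{prop:Ubres} and Lemma \ref{lem:A2adj}) shows that $\hA^\dag\phi\in\cP$ forces $\phi\in\cU_b$.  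For generic $b$ the family $\{\hh_m\}$ is a basis of $\cU_b$, so $q\hh_n=\sum_m c_{n,m}\hh_m$ as a finite sum; applying $\hT_4$ and comparing against $\mathsf{Y}\hh_n=\sum_k y_k(n)\hh_{n+k}$ gives $c_{n,n+k}=y_k(n)/p_2(-2(n+k))$, so that
\[ \hoX_q := \sum_k \frac{y_k(n)}{p_2(-2(n+k))}\,\fS_n^k \]
realizes $q\hh_n=\hoX_q\hh_n$, i.e.\ $q\in Q$.

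The main obstacle is the residue step in the converse, which demands the local characterization ``$\hA^\dag\phi\in\cP \iff \phi\in\cU_b$'' on $(x+b)^{-1}\cP$; once the singular part of $\hA^\dag\phi$ is expanded at $x=-b$, this reduces to matching the resulting simple- and double-pole coefficients against the functional $\gamma_b$.  A secondary concern is the characteristic values $b=b_j$, where $p_2(-2(n+k))$ can vanish and $\hT_4$ acquires Jordan $2$-blocks (cf.\ Section \ref{sect:charvals}); at those parameters the formula for $\hoX_q$ above breaks down, so the proposition in its present form should be understood as a generic-$b$ statement.
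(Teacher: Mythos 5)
Your proof is correct and rests on the same pillars as the paper's: the two factorizations $\hA^\dag\hA=p_2(T_2)$ and $\hA\hA^\dag=\hT_4$, the passage from the polynomial-coefficient operator $X_q$ to a difference operator via Hermite expansion (your $\mathsf{Y}$ is exactly the paper's $X_q^{\flat\natural}$, and your $\hoX_q$ coincides with the paper's $X_q^{\flat\natural}\circ p_2(-2n)^{-1}$), and genericity of $b$. Where you genuinely diverge is in the direction $q\in Q\Rightarrow X_q$ polynomial: you apply $\hA^\dag$ directly to $q\hh_n=\hoX_q\hh_n$ and read off $X_qh_n=\hoX_q\lp p_2(-2n)h_n\rp\in\cP$, whereas the paper applies $\hA$, arrives at $\hA X_qh_n=\hT_4 q\hh_n\in\cU_b$, and must then infer $X_qh_n\in\cP$ from the fact that $\hA$ cannot carry a non-polynomial rational function into $\cU_b$; your route sidesteps that inference and is cleaner. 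In the opposite direction your residue step is a worthwhile supplement rather than a detour: a short expansion at $x=-b$ shows that for $\phi=(x+b)^{-1}p$ the singular part of $\hA^\dag\phi$ is $-\gamma_b(p)\lp(x+b)^{-2}-b(x+b)^{-1}\rp$, so $\hA^\dag\phi\in\cP$ is indeed equivalent to $\phi\in\cU_b$; this makes explicit the membership $q\hh_n\in\cU_b$ that the paper tacitly assumes when it invokes the injectivity of $\hT_4$ on $\cU_b$. Your closing caveat about the characteristic values $b=b_j$ matches the paper's own footnote, so nothing is lost there.
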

\begin{proof}
  Suppose that \eqref{eq:Xqdef} holds.  Set
  \begin{equation}
    \label{eq:hoXqdef}
    \hoX_q:= X_q^{\flat\natural}\circ p_2(-2n)^{-1}.
  \end{equation}
  where $p_2(T)=(T+b^2)(T+b^2-2)$ is the eigenvalue polynomial 
  \eqref{eq:p2def}. By \eqref{eq:hT4} \eqref{eq:hTeigen}, it
  follows that
  \[ \hA \hA^\dag q \hA h_n = \hT_4 q \hh_n = X_q^{\flat\natural}
    \hh_n = \hoX_q p_2(-2n) \hh_n = \hT_4 \hoX_q \hh_n.\]
  For generic values\footnote{The operator $\hoX_b$ is not
    well-defined for non-characteristic values.  For such cases one
    must take the approach outlined in Section \ref{sect:charvals}.} of
  $b$  the operator $\hT_4$ does not annihilate any elements of
  $\cU_b$. Therefore, \eqref{eq:qhhX} holds.

  Conversely, suppose that \eqref{eq:qhhX} holds.  It follows that
  \[ \hT_4 \hoX_q \hh_n = \hoX_q p_2(-2n) \hh_n  = \hA \hA^\dag  q
    \hA h_n,\quad n\in \Nz.\]
  Since $\hT_4q \hh_n\in \cU_b$ for every $n\in \Nz$, it follows that
  $\hA^\dag q \hA h_n\in \cP$ for every $n$.  This implies that
  $\hA^\dag q \hA$ maps $\cP$ to $\cP$, and therefore must have
  polynomial coefficients.
\end{proof}
Next, we define $\cQ$ to be the algebra of operators
$\hoX_q,\; q\in Q$ as defined in \eqref{eq:hoXqdef}. By the above
Proposition, $\cQ$ is isomorphic to $Q$ and hence is a commutative
algebra.
Effectively, $Q$ and $\cQ$ are, respectively, the
isomorphic commutative algebras of multipliers and of difference
operators that arise in the recurrence relations enjoyed by the 
quasi-polynomials $\hh_n(x)$.

By inspection of \eqref{eq:hAdef} and \eqref{eq:hAadjdef}, the
polynomial algebra $Q$ consists of polynomials $q(x)$ with a double
root at $x=-b$.  Thus, as an algebra $\cQ$ is generated by difference
operators $\hoX_{q_2}, \hoX_{q_3}$ where
$q_k(x) = (x+b)^k,\; k\in \Nz$.  These correspond to, respectively, a
5th order and a 7th order recurrence relation for the
quasi-polynomials $\hh_n(x)$.  In the remainder of this section we
give a bispectral proof of the 5th order recurrence relation
\eqref{eq:rr5} and then exhibit and prove the 7th order recurrence
relation corresponding to multiplication by $(x+b)^3$.

\begin{proof}[Bispectral proof of Proposition \ref{prop:rr5}]
  Let $T_2,\hA, U, V,\hU, \hV$ be the differential operators defined
  in \eqref{eq:T2def} \eqref{eq:hAdef} \eqref{eq:Uxdef}
  \eqref{eq:Vxdef} \eqref{eq:hUxdef} \eqref{eq:hVxdef}.  By
  \eqref{eq:Uconj} \eqref{eq:Vconj} \eqref{eq:T2conj} and direct
  calculation,
  \begin{equation}
    \label{eq:rr5idents}
  \begin{aligned}
    [ x+b, T_2] &=  V-U\\
    [x+b,\hA]  &= V-U+(x+b)^{-1}\\
    [(x+b)^2,\hA]  &= 2(x+b)(V-U)=V^2-U^2+2\\
    \hV V- \hU U
                    &= V^2-U^2 + (x+b)^{-1}(V+U) = V^2-U^2+2.
  \end{aligned}
  \end{equation}
  Hence, by \eqref{eq:hermDDel} \eqref{eq:hvdef} \eqref{eq:hudef}
  \eqref{eq:hudef2} \eqref{eq:hvdef2},
  \begin{align*}
    (x+b)^2 \hh_n &= (\oJ+b)^2 \hh_n + (V^{2} -
                    U^{2} + 2)h_n\\
                  &= (\oJ+b)^2 \hh_n + (\hV V-\hU U) h_n\\
    &= (\oJ+b)^2 \hh_n + \oV \hv_n - \oU \hu_n\\
    &= (\oJ+b)^2 \hh_n + (\oV \hoV - \oU \hoU) \hh_n.
  \end{align*}
\end{proof}
\noindent
\tre{One could also do the proof using the intertwining relation
\eqref{eq:hoXqtA}.  Since
\[ \hoX_{q_2} = (\oJ+b)^2 + \oV \hoV - \oU \hoU, \]
the above proof amounts to a proof of the identity
\begin{align}
  \label{eq:RR5i}
  &((\oJ+b)^2 + \oV \hoV - \oU \hoU)(b^2-2n)(\oJ+b)
  -    ((\oJ+b)^2 + \oV \hoV - \oU \hoU) \oU\\ \nonumber
  &\qquad = ((b^2-2n)(\oJ+b) -\oU) (\oJ+b)^2  
\end{align}
This can be done by applying both sides to $h_n$ and rewriting the
actions as differential operators in $x,\partial_x$.  This amounts to
applying the inverse of the anti-homomorphism $\flat\natural$.
Observe that
\begin{align*}
  &(x+b)^3 \hA = (x+b)^3 T_2 - (x+b)^2 U\\
  &((x+b)^3 \hA)^{\flat\natural} = ((b^2-2n)(\oJ+b) -\oU) (\oJ+b)^2
  \\
  &\lp (x+b)T_2((x+b)^2 + V^2 -  U^2+2)\rp ^{\flat\natural}
  = ((\oJ+b)^2 + \oV \hoV - \oU \hoU)(b^2-2n)(\oJ+b)\\
  &\lp U((x+b)^2+V^2-U^2+2)\rp^{\flat\natural}
  =((\oJ+b)^2 + \oV \hoV - \oU \hoU) \oU\\
  &(x+b)T_2((x+b)^2 + V^2 -  U^2+2)-  U((x+b)^2+V^2-U^2+2)\\
  &\qquad = (x+b)\hA((x+b)^2+V^2-U^2+2)
\end{align*}
Thus, \eqref{eq:RR5i} is equivalent to the identity
\begin{align*}
(x+b)\hA((x+b)^2+V^2-U^2+2) = (x+b)^3 \hA
\end{align*}
Thus, the bispectral proof given above is more or less the proof of
this relation.}

\begin{prop}
    \label{prop:rr7}
    The  quasi-polynomials $\hh_n(x),\; n\in \Nz$ satisfy the
    following 7th order recurrence relation.
  \begin{equation}
    \label{eq:rr7}
    \begin{aligned}
    (x+b)^3 \hh_n &= (\oJ+b)^3\hh_n+\frac32 (\oJ+b)(\oV\hoV - \oU
    \hoU) \hh_n+ \frac32 (\hoV+ \hoU) \hh_n.
    \end{aligned}
  \end{equation}
\end{prop}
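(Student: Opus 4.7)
The plan is to mirror the bispectral proof of Proposition \ref{prop:rr5}: compute the commutator $[(x+b)^3,\hA]$, apply it to $h_n$, and then translate the result into the $\hh$-basis using the bridging identities established there. Starting from $(x+b)^3 \hh_n = \hA(x+b)^3 h_n + [(x+b)^3,\hA]h_n$, the first summand is handled by the commutativity of $\oJ$ with $\hA$ (they act on different variables) together with $\oJ h_m = x h_m$, giving $\hA(\oJ+b)^3 h_n = (\oJ+b)^3 \hh_n$, which is the leading term of \eqref{eq:rr7}.

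The main computation is to put the commutator $[(x+b)^3,\hA]$ into a form amenable to translation. Iterating the identities in \eqref{eq:rr5idents} by a Leibniz-style expansion (using $[(x+b)^2,\hA] = V^2-U^2+2$ and $[x+b,\hA] = V-U+(x+b)^{-1}$, together with $[V-U,x+b] = -2$), I expect to arrive at
\[ [(x+b)^3,\hA] = \tfrac{3}{2}(V^2-U^2+2)(x+b) + 3(x+b). \]
The key feature of this presentation is that the multiplication operator $(x+b)$ sits on the far right, so that when applied to $h_n$ it acts first as ordinary multiplication, where the substitution $(x+b)h_m = (\oJ+b)h_m$ is valid. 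The commutativity of $(\oJ+b)$ (acting on $n$) with the differential operator $V^2-U^2+2$ (acting on $x$) then lets me pull $(\oJ+b)$ past the differential operator, and the two bridging identities
\[ (V^2-U^2+2)h_n = (\hV V - \hU U)h_n = (\oV\hoV - \oU\hoU)\hh_n, \qquad (\hV+\hU)h_n = 2(x+b)h_n, \]
the latter giving $(\hoV+\hoU)\hh_n = 2(\oJ+b)h_n$, convert the two pieces of $[(x+b)^3,\hA]h_n$ into the two remaining terms of \eqref{eq:rr7}, each with coefficient $\tfrac{3}{2}$.

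The main subtlety, and an obvious sign-error trap, is to keep straight that $(x+b)h_m$ may be replaced by $(\oJ+b)h_m$ only when $(x+b)$ acts directly on an $h_m$, not after a differential operator in $x$ has been applied. This is why the commutator must be re-expressed with $(x+b)$ placed on the right before conversion; the more natural-looking form $[(x+b)^3,\hA] = 3(x+b)^2(V-U) - 3(x+b)$ hides the extra commutator $[V^2-U^2,x+b] = -4(x+b)$ that is incurred when pulling $(x+b)$ through the differential operator, and overlooking that contribution produces the wrong sign in front of $(\hoV+\hoU)\hh_n$.
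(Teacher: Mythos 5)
Your proposal is correct and takes essentially the same route as the paper: the commutator identity you state, $[(x+b)^3,\hA]=\tfrac32(V^2-U^2+2)\circ(x+b)+3(x+b)$, is literally the paper's $\tfrac32(\hV V-\hU U)\circ(x+b)+\tfrac32(\hV+\hU)$ since $\hV V-\hU U=V^2-U^2+2$ and $\hV+\hU=2(x+b)$, and your translation to the $\hh_n$ basis via the bridging identities of Proposition \ref{prop:rr5} is exactly how the paper concludes by applying both sides to $h_n$.
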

\begin{proof}
  By \eqref{eq:rr5idents},
  \begin{align*}
    (x+b)^3 \hA = \hA\circ (x+b)^3+ \frac32 \lp \hV V - \hU U\rp \circ
    (x+b)+ \frac32 (\hV+ \hU)
  \end{align*}
  The desired relation follows by applying both sides to $h_n$.
\end{proof}

\end{document}